\newcommand*\fullref[3][\relax]{%
  \ifdefined\hyperref%
    {\hyperref[#3]{#2\penalty 200\ \ref*{#3}#1}}%
  \else%
    {#2\penalty 200\ \relax\ref{#3}#1}%
  \fi%
}
\newcommand*{\defterm}[1]{\emph{#1}}
\newcommand\chyph{\penalty\@M-\hskip\z@skip}
\theoremstyle{definition}
\newtheorem{definition}{Definition}[section]
\newtheorem{example}[definition]{Example}
\newtheorem{question}[definition]{Question}
\theoremstyle{plain}
\newtheorem{lemma}[definition]{Lemma}
\newtheorem{proposition}[definition]{Proposition}
\newtheorem{theorem}[definition]{Theorem}
\numberwithin{equation}{section}
\DeclarePairedDelimiter{\parens}{\lparen}{\rparen}
\DeclarePairedDelimiter{\bracks}{\lbrack}{\rbrack}
\DeclarePairedDelimiter{\set}{\{}{\}}
\DeclarePairedDelimiterX{\gset}[2]{\{}{\}}{\,#1:#2\,}
\newcommand*{\biggg}{\bBigg@{4}}
\newcommand*{\Biggg}{\bBigg@{5}}
\newcommand*{\sizeddelimiter}[2]{\bBigg@{#1}#2}
\newcommand*{\sizedsurd}[2][]{%
  {\@mathmeasure\z@{\nulldelimiterspace\z@}%
     {\sqrt[#1]{\vcenter to #2\big@size{}}}%
     \box\z@}%
 }
\DeclareMathOperator{\im}{im}
\newcommand*{\nset}{\mathbb{N}}
\newcommand*{\zset}{\mathbb{Z}}
\newcommand*{\emptyword}{\varepsilon}
\newcommand*{\rev}{\mathrm{rev}}
\DeclarePairedDelimiterX{\pres}[2]{\langle}{\rangle}{#1\,\delimsize\vert\,\mathopen{}#2}
\newcommand*{\drel}[1]{\mathcal{#1}}
\newcommand*{\N}{\mathcal{N}}
\newcommand*\A{\mathcal{A}}
\newcommand*\B{\mathcal{B}}
\newcommand*\C{\mathcal{C}}
\renewcommand*\P{\mathcal{P}}
\newcommand\CF{\mathcal{CF}}
\newcommand\DCF{\mathcal{DCF}}
\newcommand\cfwp{U(\CF)}
\newcommand\dcfwp{U(\DCF)}
\newcommand\WP{\operatorname{WP}}
\newcommand\BR{\mathrm{BR}}
\newcommand\ra{\rightarrow}
\newcommand*\pda[1]{\mathcal{#1}}
\begin{document}

\title{Context-free word problem semigroups}

\author{Tara Brough}
\address{
Centro de Matem\'{a}tica e Aplica\c{c}\~{o}es\\
Faculdade de Ci\^{e}ncias e Tecnologia\\
Universidade Nova de Lisboa\\
2829--516 Caparica\\
Portugal
}
\email{tarabrough@gmail.com}

\thanks{The first author was supported by the {\scshape FCT} (Funda\c{c}\~{a}o para a Ci\^{e}ncia e a
  Tecnologia/Portuguese Foundation for Science and Technology) fellowship {\scshape SFRH}/\allowbreak {\scshape
    BPD}/121469/2016 and by the FCT project {\scshape UID}/Multi/04621/2013.}

\author{Alan J. Cain}
\address{%
Centro de Matem\'{a}tica e Aplica\c{c}\~{o}es\\
Faculdade de Ci\^{e}ncias e Tecnologia\\
Universidade Nova de Lisboa\\
2829--516 Caparica\\
Portugal
}
\email{%
a.cain@fct.unl.pt
}

\thanks{The second author was supported by an Investigador {\scshape FCT} fellowship ({\scshape IF}/01622/2013/{\scshape
    CP}1161/{\scshape CT}0001).}

\thanks{For the first and second authors, this work was partially supported by FCT projects {\sc UID}/{\sc
    MAT}/00297/2019 (Centro de Matem\'{a}tica e Aplica\c{c}\~{o}es), {\scshape PTDC}/{\scshape MHC-FIL}/2583/2014 and
  {\scshape PTDC}/{\scshape MAT-PUR}/31174/2017.}

\author{Markus Pfeiffer}
\address{%
School of Computer Science\\
University of St Andrews \\
North Haugh, St Andrews\\
Fife KY16 9SX\\
United Kingdom
}
\email{markus.pfeiffer@st-andrews.ac.uk}

\thanks{This work was started during a visit by the third author to the Universidade Nova de Lisboa, which was supported
  by the exploratory project {\scshape IF}/01622/2013/\allowbreak{\scshape CP}1161/{\scshape CT}0001 attached to the
  second author's research fellowship.}

\begin{abstract}
  This paper studies the classes of semigoups and monoids with context-free and deterministic context-free word
  problem. First, some examples are exhibited to clarify the relationship between these classes and their connection
  with the notions of word-hyperbolicity and automaticity. Second, a study is made of whether these classes are closed
  under applying certain semigroup constructions, including direct products and free products, or under regressing from
  the results of such constructions to the original semigroup(s) or monoid(s).
\end{abstract}

\maketitle

\section{Introduction}

The deep connections between formal language theory and group theory are perhaps most clearly evidenced by the famous
1985 theorem of Muller and Schupp, which says that a group has context-free word problem if and only if it is virtually
free \cite{muller_contextfree,dunwoody_accessibility}; indeed, virtually free groups have \emph{deterministic}
context-free word problem. Since then, many studies have analyzed the classes of groups with word problems in various
families of formal languages. Herbst \& Thomas characterized the groups with one-counter word problem
\cite[Theorem~5.1]{herbst_onecounter}. (For a later elementary proof of this result, see \cite{holt_onecounter}.) The
first author of the present paper investigated groups whose word problem is an intersection of finitely many
context-free languages \cite{brough_groups,brough_phd}. Holt et al.\ studied the class of groups whose co-word problem is
context-free \cite{holt_cfcoword} and Holt and R\"{o}ver studied the the class of groups whose co-word problem is
indexed \cite{holt_indexedcoword}.

The word problem of a group is the language of words representing the identity over some set of generators and their
inverses. Thus two words $u$ and $v$ are equal in a group $G$ if and only if $uV$ is in the word problem, where $V$ is
obtained from $v$ by replacing each symbol by its inverse and reversing the word. A natural question is how to
generalize this definition to semigroups. Duncan and Gilman \cite[Definition~5.1]{duncan_hyperbolic} defined the word
problem of a semigroup $S$ with respect to a generating set $A$ to be
\begin{equation}
  \label{eq:sgwordprobdef}
  \WP(S,A) = \gset[\big]{u\#v^\rev}{u,v \in A^+, u =_S v},
\end{equation}
where $v^\rev$ is the reverse of $v$.
This definition fits well with the group definition and is natural when considering word problems recognizable
by automata equipped with a stack. It was used by Holt, Owens, and Thomas in their study of groups and semigroups with
one-counter word problem \cite{holt_onecounter}, and by Hoffmann et al.\ in their study of semigroups with context-free
word problem \cite{hoffmann_contextfree}.

The main conclusions of Hoffmann et al.'s earlier study were the result that the class of semigroups with context-free
word problem is closed under passing to finite Rees index subsemigroups and extensions
\cite[Theorem~1]{hoffmann_contextfree} and a characterization of completely simple semigroups with context-free word
problem as Rees matrix semigroups over virtually free groups \cite[Theorem~2]{hoffmann_contextfree}.

This paper explores new directions in the study of the class of semigroups with context-free word problem, including
monoids with context-free word problem, and also considers the classes of semigroups and monoids with deterministic
context-free word problem. First, \fullref{Section}{sec:examples} exhibits some natural classes of semigroups and
monoids that lie within and outside these classes; in particular \fullref{Example}{eg:notdetcfwp} shows that having
context-free and deterministic context-free word problem do not coincide for semigroups or monoids, unlike (as noted
above) for groups. \fullref{Section}{sec:wordhyp} discusses connections with the theories of word-hyperbolic and
automatic semigroups: any semigroup or monoid with context-free word problem is word-hyperbolic, but there are
non-automatic semigroups that have context-free word problem. The remainder of the core of the paper
(\fullref{Sections}{sec:dirprod}--\ref{sec:reesmatrix}) focusses on various constructions: direct products, free
products, strong semilattices of semigroups, Rees matrix semigroups and Bruck--Reilly extensions. For each
construction, the questions of interest are: (1) Are the classes of semigroups and monoids with context-free or
deterministic context-free word problem closed under that construction? (2) If the result of applying such a
construction lies in one of these classes, must the original semigroup(s) or monoids(s) lie in that same class? Finally,
\fullref{Section}{sec:openprob} lists some open problems.

\section{Preliminaries}

The \defterm{word problem} for a semigroup $S$ is defined as \eqref{eq:sgwordprobdef} above. Similarly, the
\defterm{word problem} for a monoid $M$ with respect to a generating set $A$ is the language
\begin{equation}
  \label{eq:monwordprobdef}
  \WP(M,A) = \gset[\big]{u\#v^\rev}{u,v \in A^*, u =_M v}.
\end{equation}

\begin{proposition}[{\cite[Proposition~8]{hoffmann_contextfree}}]
  \label{prop:changegensubsemigroups}
  Let $\mathfrak{C}$ be a class of languages closed under inverse homomorphisms and intersection with regular
  languages. Then
  \begin{enumerate}
  \item If a semigroup or monoid has word problem in $\mathfrak{C}$ with respect to some generating set, then it has
    word problem in $\mathfrak{C}$ with respect to any generating set.
  \item The class of semigroups (resp.\ monoids) with word problem in $\mathfrak{C}$ is closed under taking finitely
    generated subsemigroups (resp.\ submonoids).
  \end{enumerate}
\end{proposition}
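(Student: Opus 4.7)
The plan for Part~(1) is to deduce $\WP(S, B) \in \mathfrak{C}$ from $\WP(S, A) \in \mathfrak{C}$ by expressing the former as the inverse image of the latter under a substitution-style homomorphism, intersected with a regular language. For each $b \in B$ I would fix a word $\alpha(b) \in A^+$ with $\alpha(b) =_S b$. The naive homomorphism $b \mapsto \alpha(b)$, $\# \mapsto \#$ from $(B \cup \{\#\})^*$ to $(A \cup \{\#\})^*$ does not do the job, because the reversal of the second half of the word in the definition of $\WP$ is not compatible with applying $\alpha$ letter-by-letter: in general, $\alpha(v)^{\rev}$ and $\alpha(v^{\rev})$ represent different elements of $S$.

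The standard remedy is to introduce two disjoint tagged copies $B_L$ and $B_R$ of $B$ and a single homomorphism $h : (B_L \cup \{\#\} \cup B_R)^* \to (A \cup \{\#\})^*$ given by $h(b_L) = \alpha(b)$, $h(b_R) = \alpha(b)^{\rev}$, and $h(\#) = \#$. Intersecting $h^{-1}(\WP(S, A))$ with the regular language $B_L^+ \# B_R^+$ then produces a language $L \in \mathfrak{C}$ consisting precisely of those tagged words $u_L \# v_R^{\rev}$ with $u =_S v$, so $L$ is a faithful encoding of $\WP(S, B)$ on the enlarged alphabet. The principal technical obstacle is to transfer $L$ back to $\WP(S, B)$ itself over $(B \cup \{\#\})^*$ using only the two permitted closure operations---in effect, to ``forget'' the tags while respecting closure under inverse homomorphism and intersection with a regular language.

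For Part~(2), let $T$ be a finitely generated subsemigroup (or submonoid) of $S$ with finite generating set $C \subseteq T$, and take $A' = A \cup C$ as a generating set for $S$. Part~(1) yields $\WP(S, A') \in \mathfrak{C}$. Since equality in $T$ agrees with equality in $S$ on words over $C$, we have $\WP(T, C) = \WP(S, A') \cap (C^+ \# C^+)$ in the semigroup case (respectively, $\cap \, (C^* \# C^*)$ in the monoid case), which lies in $\mathfrak{C}$ by closure under intersection with regular languages.
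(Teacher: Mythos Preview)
The paper does not prove this proposition; it merely cites \cite[Proposition~8]{hoffmann_contextfree}. So there is no in-paper argument to compare your plan against, and I comment only on the soundness of what you have written.

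Your treatment of Part~(2) is fine (conditional on Part~(1)): enlarging the generating set to $A' = A\cup C$, invoking Part~(1), and then intersecting with the regular language $C^+\#C^+$ (respectively $C^*\#C^*$) is exactly the right argument.

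For Part~(1), however, your plan stops short of a proof. You correctly diagnose why the naive homomorphism $b\mapsto\alpha(b)$ fails --- the post-$\#$ half carries a reversal that does not commute with letter-wise substitution --- and you correctly obtain, via the tagged alphabet $B_L\cup\{\#\}\cup B_R$ and the homomorphism $h$, a language $L\in\mathfrak{C}$ that is a faithful copy of $\WP(S,B)$. But you then explicitly leave the ``untagging'' step as an unresolved obstacle, and this obstacle is genuine. The map $L\to\WP(S,B)$ is a \emph{direct} letter-to-letter homomorphism (erase the tags), and the hypotheses on $\mathfrak{C}$ give closure only under \emph{inverse} homomorphism; concretely, $\DCF$ satisfies the hypotheses but is not closed under direct homomorphisms, so one cannot simply project. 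Nor can the untagging be realised as an inverse homomorphism from $(B\cup\{\#\})^*$ into the tagged alphabet: any homomorphism $\sigma\colon(B\cup\{\#\})^*\to(B_L\cup B_R\cup\{\#\})^*$ assigns to each $b\in B$ a single fixed word, so $\sigma$ cannot send a symbol to $b_L$ before $\#$ and to $b_R$ after $\#$, and $\sigma^{-1}(L)\cap B^+\#B^+$ is never $\WP(S,B)$.

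What actually makes your idea go through cleanly is the observation that the map
\[
u\,\#\,v^{\rev}\;\longmapsto\;\alpha(u)\,\#\,\bigl(\alpha(v)\bigr)^{\rev}
\]
is a (deterministic) gsm-mapping, so that closure under \emph{inverse gsm-mappings} --- which both $\CF$ and $\DCF$ enjoy, and which the paper itself invokes in its later \fullref{Theorems}{thm:strongsemilattice} and the Rees-matrix theorem --- yields $\WP(S,B)$ directly as the inverse image of $\WP(S,A)$, with no tagging or untagging needed. If you want to stay strictly within the two closure properties stated in the proposition, you must supply a different argument for the untagging (or consult the cited source to see how it is handled there); as written, your Part~(1) is a plan with an acknowledged gap, not a proof.
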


The preceding result applies in particular when $\mathfrak{C}$ is the class of context-free or deterministic
context-free languages \cite{ginsburg_deterministic,hopcroft_automata}.

If a semigroup (resp.\ monoid) has word problem in a class of languages $\mathfrak{C}$,
it is said to be a $U(\mathfrak{C})$ semigroup (resp.\ monoid). We denote the classes of context-free
and deterministic context-free languages by $\CF$ and $\DCF$ respectively.
The `$U$' notation is because
\eqref{eq:sgwordprobdef} and \eqref{eq:monwordprobdef} treat the word problem as an `unfolded' relation rather than a
`two-tape' relation; see \cite{bcm_relationlanguages} for a systematic study.

\section{Examples}
\label{sec:examples}

We recall some less commonly-used terms from the theory of rewriting systems; see \cite{book_srs} for general background. A
rewriting system $(A,\drel{R})$ is \defterm{monadic} if it is length-reducing and the right-hand side of each rewrite
rule in $\drel{R}$ lies in $A \cup \{\emptyword\}$. A monadic rewriting system $(A,\drel{R})$ is \defterm{regular}
(respectively, \defterm{context-free}) if, for each $a \in A \cup\{\emptyword\}$, the set of all left-hand sides of
rewrite rules in $\drel{R}$ with right-hand side $a$ is a regular (respectively, context-free) language.

\begin{theorem}[{\cite[Theorem~3.1]{cm_wordhypunique}}]
  \label{thm:contextfreesrs}
  Let $(A,\drel{R})$ be a confluent context-free monadic rewriting system. Then the monoid presented by
  $\pres{A}{\drel{R}}$ is $\cfwp$, and a context-free grammar generating its word problem can be effectively constructed
  from context-free grammars describing $\drel{R}$.
\end{theorem}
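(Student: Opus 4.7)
The plan is to construct a CFG for the word problem in two layers. First, for each $a \in A \cup \{\emptyword\}$ I build a CFG for the ``reduction language'' $L_a := \{u \in A^* : u \reduces a\}$; then I combine these using a palindromic construction whose correctness rests on confluence of $(A,\drel{R})$. Monadicity makes the system length-reducing, hence terminating, so confluence gives unique normal forms and the identity $u =_M v$ iff $\mathrm{NF}(u) = \mathrm{NF}(v)$.

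For the first layer, let $H_a$ (with start symbol $T_a$, terminals $A$) be the given CFG generating the set of LHSs of rules with RHS $a$. Introduce a fresh nonterminal $N_a$ for each $a \in A \cup \{\emptyword\}$, and assemble a grammar containing all the $H_a$'s reinterpreted so that each terminal $b \in A$ appearing in any $H_a$-production is replaced by the nonterminal $N_b$; adjoin the productions $N_a \to a$ for $a \in A$, $N_\emptyword \to \emptyword$, and $N_a \to T_a$ for every $a$. Intuitively, a derivation of $N_a$ builds a reduction tree of some word down to $a$: either $N_a$ outputs $a$ directly, or it expands via $T_a$ to a template $\ell \in L_{\to a}$ whose symbols $b$ are themselves recursively replaced by words reducing to $b$. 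A mutual induction on derivation-tree height and on reduction length then gives $L(N_a) = L_a$; confluence is not needed here, only the hypothesis that each $H_a$ is a CFG. Reversing productions syntactically yields grammars with nonterminals $\widetilde{N}_a$ such that $L(\widetilde{N}_a) = L_a^\rev$.

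For the second layer, confluence and termination give
\[
\WP(M,A) \;=\; \bigcup_{w \in A^*} L_w \cdot \{\#\} \cdot L_w^\rev,
\]
and the key lemma is that for $w = a_1 \cdots a_k$ one has $L_w = L_{a_1} L_{a_2} \cdots L_{a_k}$: the inclusion $\supseteq$ is immediate by concatenating reductions, while $\subseteq$ follows from the monadic structure, since in any reduction $u \reduces w$ each symbol of $w$ traces back to a contiguous subword of $u$, with the interstitial subwords reducing to $\emptyword$. Now add a fresh start symbol $S$ with productions $S \to \#$ and $S \to N_a \, S \, \widetilde{N}_a$ for each $a \in A$. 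Unfolding $S$ produces exactly the palindromic forms $N_{a_1} \cdots N_{a_k} \, \# \, \widetilde{N}_{a_k} \cdots \widetilde{N}_{a_1}$, generating $L_{a_1} \cdots L_{a_k} \cdot \{\#\} \cdot L_{a_k}^\rev \cdots L_{a_1}^\rev$; ranging over all $a_1, \ldots, a_k \in A$ gives $\WP(M,A)$. The main obstacle is the identity $L(N_a) = L_a$ in the presence of $\emptyword$-rules, since subwords of $u$ that reduce to $\emptyword$ disappear from every intermediate word yet must still be reflected somewhere in the derivation tree; allowing $N_\emptyword$-nonterminals to appear in any template, together with the production $N_\emptyword \to T_\emptyword$, accommodates them. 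All steps are effective: from the CFGs describing $\drel{R}$, the substitution, reversal, and palindromic assembly are purely syntactic manipulations of production sets.
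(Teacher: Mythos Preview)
The paper does not prove this theorem; it is quoted from \cite{cm_wordhypunique}, with only the parenthetical remark that the original proof constructs the word problem directly. So there is no in-paper argument to compare against, and your two-layer construction is indeed the natural route (and essentially the one in the cited source).

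Your outline is sound, but the treatment of $\emptyword$-rules has a genuine gap. With only the productions $N_a \to a$, $N_\emptyword \to \emptyword$, $N_a \to T_a$, and the substitution $b \mapsto N_b$ inside the $H_a$, one does \emph{not} obtain $L(N_a) = L_a$. Take $A = \{a,b\}$ with the single rule $b \to \emptyword$: then $ba \in L_a$, but there are no rules with right-hand side $a$, so $T_a$ derives nothing and $L(N_a) = \{a\}$; similarly $L(N_\emptyword) = \{\emptyword,b\}$ rather than $b^*$. Your remedial clause ``allowing $N_\emptyword$-nonterminals to appear in any template'' does not repair this, because the only template for $N_a$ here is the single terminal $a$, and inserting $N_\emptyword$ into an absent $H_a$ does nothing. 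What is actually needed is an explicit absorption rule such as $N_a \to N_\emptyword N_a \mid N_a N_\emptyword$ for every $a \in A \cup \{\emptyword\}$ (equivalently, $N_\emptyword \to N_\emptyword N_\emptyword$ together with $N_a \to N_\emptyword\, a\, N_\emptyword$); once that is added the mutual induction goes through. The same omission recurs in the second layer: when the common normal form is $\emptyword$, the base production $S \to \#$ yields only $\#$, not $L_\emptyword\,\#\,L_\emptyword^{\rev}$; replacing it by $S \to N_\emptyword\,\#\,\widetilde{N}_\emptyword$ settles this. With these two local fixes your argument is complete and effective.
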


(The preceding result originally stated that a monoid satisfying the hypothesis was word-hyperbolic; however, the proof
proceeds by constructing the word problem for the monoid. The `effective construction' part follows easily by inspecting
the construction in the proof.)

\begin{example}
  \label{eg:nocfcrosssection}
  This example shows that a $\cfwp$ monoid need not have a context-free cross section (that is, a language over some
  generating set containing a unique representative for every element).

  Let $K = \gset{a^\alpha b^\alpha c^\alpha}{\alpha \in \nset\cup\set{0}}$ and let $L = \set{a,b,c}^* - K$. It is
  well-known that $K$ is not a context-free language but that $L$ is a context-free language. Let
  $A = \set{a,b,c,x,y,z}$ and let $\drel{R} = \gset{(xwy,z)}{w \in L}$. Let $M$ be the monoid presented by
  $\pres{A}{\drel{R}}$. By \fullref{Theorem}{thm:contextfreesrs}, $M$ is $\cfwp$. Suppose that $M$
  admits a context-free cross-section. Then $M$ admits a context-free cross-section $J \subseteq A^*$. Let $u$ be the
  unique word in $J$ such that $u =_M z$, and let $J' = (J \setminus \set{u}) \cup \set{z}$; then $J'$ is also a
  context-free cross-section of $M$. Let $H = J' \cap x\set{a,b,c}^*y$. Then $H$ is context-free and comprises precisely
  the words $xwy$ where $w \in K$, for if $w \in L$, then $xwy =_M z$, and the representative of $z$ in $J'$ is the word
  $z$ itself. Hence, since the class of context-free languages is closed under right and left quotients with regular
  sets, $K = x\backslash H/y$ is context-free. This is a contradiction, and so $M$ does not admit a context-free
  cross-section.
\end{example}

\begin{example}
  \label{eg:notdetcfwp}
  This example shows that the class of $U(\DCF)$ semigroups is properly contained in the class of $U(\CF)$ semigroups.

  Let $K$ be the language of palindromes over $\set{a,b}$. It is well-known that $K$ is context-free but not
  deterministic context-free. Let $A = \set{a,b,x,y,z}$ and let $\drel{R} = \gset{(xwy,z)}{w \in L}$. Let $M$ be the
  monoid presented by $\pres{A}{\drel{R}}$.

  By \fullref{Theorem}{thm:contextfreesrs}, $M$ is $\cfwp$. Suppose, with the aim of obtaining a contradiction, that $M$
  is $\dcfwp$. Then $\WP(M,A)$ is deterministic context-free. Let
  $L = (\WP(M,A) \cap A^*\#z)/\set{\#z} \cap \set{a,b,x,y}^*$; then $L$ is the language of words over $\set{a,b,x,y}$
  that are equal to $z$ in $M$. Furthermore, $L$ is deterministic context-free, since the class of deterministic
  context-free languages is closed under intersection with regular languages \cite[Theorem 10.4]{hopcroft_automata} and
  right quotient by regular languages \cite[Theorem 10.2]{hopcroft_automata}.

  Now, $K = x\backslash L/y$. The class of determinstic context-free languages is closed under left quotient by a
  singleton (since a deterministic pushdown automaton can simulate reading a fixed word before it starts reading input),
  and, as noted above, is closed under right quotient by regular languages. Hence $K$ is deterministic
  context-free. This is a contradiction, and so $M$ is not $\dcfwp$.
\end{example}

We conjecture that the bicyclic monoid $B = \pres{b,c}{bc = \emptyword}$, which is $\cfwp$ by
\fullref{Theorem}{thm:contextfreesrs}, is not $\dcfwp$. To motivate this conjecture, consider the intersection of
$\WP(B,\set{b,c})$ with the regular language $c^*\#c^*b^*c^*$. Then a deterministic pushdown automaton recognizing this
intersection, when reading $c^\alpha\#c^\alpha b^\beta c^\gamma$ would have to enter an accept state after reading
$c^\alpha\#c^\alpha$ but could then accept the whole input if and only if $\alpha \geq \beta = \gamma$, and checking two
independent comparisons is heuristically impossible for a pushdown automaton.

\begin{example}
  An example of a monoid that is `close' to being a free group but is not $\cfwp$ is the free inverse monoid of rank $1$
  and hence (by \fullref{Proposition}{prop:changegensubsemigroups}) of any finite rank. This follows from applying the
  pumping lemma to the intersection of the word problem and the regular language $x^*(x^{-1})^*x^*\#x^*$ (where $x$ is
  the free generator); see \cite[Theorem~1]{brough_inverse}.
\end{example}

\section{Relationship to word-hyperbolicity and automaticity}
\label{sec:wordhyp}

Hyperbolic groups have become one of the most fruitful areas of group theory since their introduction by Gromov
\cite{gromov_hyperbolic}. The concept of hyperbolicity can be generalized to semigroups and monoids in more than one
way, but here we consider the linguistic definition that uses Gilman's characterization of hyperbolic groups using
context-free languages \cite{gilman_wordhyperbolic}. A \defterm{word-hyperbolic structure} for a semigroup $S$ is a
pair $(L,M(L))$, where $L$ is a regular language over an alphabet $A$ representing a finite generating set for $S$ such
that $L$ maps onto $S$, and where
\[
M(L) = \{u\#_1v\#_2w^\rev : u,v,w \in L \land uv =_S w\}
\]
(where $\#_1$ and $\#_2$ are new symbols not in $A$) is
context-free.

\begin{theorem}
  Every $\cfwp$ semigroup is word-hyperbolic.
\end{theorem}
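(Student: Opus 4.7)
The plan is to exhibit a word-hyperbolic structure for $S$ directly from the context-free word problem, using closure of $\CF$ under inverse homomorphism and intersection with regular languages.

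First I would let $A$ be a finite generating set such that $\WP(S,A)$ is context-free, and take $L = A^+$. This is plainly a regular language mapping onto $S$, so it remains to verify that
$$M(L) = \{u\#_1 v \#_2 w^\rev : u,v,w \in A^+ \land uv =_S w\}$$
is context-free.

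The key observation is that $uv =_S w$ is the same as $(uv)\#w^\rev \in \WP(S,A)$, so morally $M(L)$ differs from $\WP(S,A)$ only by inserting a marker $\#_1$ between $u$ and $v$ and renaming the separator $\#$ as $\#_2$. To make this precise I would define the monoid homomorphism
$$\phi : (A \cup \{\#_1,\#_2\})^* \to (A \cup \{\#\})^*$$
by $\phi(a) = a$ for $a \in A$, $\phi(\#_1) = \emptyword$, and $\phi(\#_2) = \#$. Then for any word of the form $u \#_1 v \#_2 x$ with $u,v,x \in A^*$, we have $\phi(u\#_1 v\#_2 x) = uv\#x$, which lies in $\WP(S,A)$ exactly when $u,v$ and $x^\rev$ are nonempty and $uv =_S x^\rev$. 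Consequently
$$M(L) = \phi^{-1}(\WP(S,A)) \cap \bigl(A^+ \#_1 A^+ \#_2 A^+\bigr).$$
Since $\WP(S,A)$ is context-free, $\phi^{-1}(\WP(S,A))$ is context-free, and intersecting with the regular language $A^+\#_1 A^+ \#_2 A^+$ preserves this, so $M(L)$ is context-free. Therefore $(L,M(L))$ is a word-hyperbolic structure for $S$.

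There is no real obstacle here: the whole proof is a straightforward translation between the `unfolded' relation $\WP(S,A)$ and the triple relation $M(L)$, mediated by erasing one separator and renaming the other. The only point that warrants a sentence of care is checking that the intersection with $A^+ \#_1 A^+ \#_2 A^+$ really does cut out exactly $M(L)$ from $\phi^{-1}(\WP(S,A))$, since $\phi^{-1}(\WP(S,A))$ contains many spurious words in which the symbols $\#_1,\#_2$ appear in the wrong positions or wrong multiplicities.
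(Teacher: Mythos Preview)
Your proof is correct and takes essentially the same approach as the paper: define the homomorphism erasing $\#_1$ and sending $\#_2$ to $\#$, then realise $M(L)$ as the inverse image of $\WP(S,A)$ intersected with the obvious regular language. The only cosmetic difference is that the paper takes $L = A^*$ rather than $A^+$; your choice of $A^+$ is arguably the cleaner one for semigroups.
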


The proof is in effect the first paragraph of the proof of \fullref{Theorem}{thm:contextfreesrs} as given in
\cite[Proof of Theorem~2]{cm_wordhypunique}.  We give it here for completeness.

\begin{proof}
  Let $S$ be a $\cfwp$ semigroup, and let $A$ be a finite generating set for $S$.
  Let $\phi : (A \cup \{\#_1,\#_2\})^* \to (A \cup \{\#_1\})^*$ be the homomorphism extending
  \[
    \#_1 \mapsto \emptyword,\quad \#_2 \mapsto \#,\quad a \mapsto a \text{ for all $a \in A$}.
  \]
  Then $M(A^*) = (\WP(S,A))\phi^{-1} \cap (A^*\#_1A^*\#_2A^*)$. Since the class of context-free languages is closed under taking
  inverse homomorphisms, $M(A^*)$ is also context-free. Hence $(A^*,M(A^*))$ is a word-hyperbolic structure for $S$.
\qed
\end{proof}

All hyperbolic groups are automatic \cite[Theorem~3.4.5]{epstein_wordproc}, but word-hyperbolic semigroups may not even
be \emph{asynchronously} automatic \cite[Example~7.7]{hoffmann_relatives}. Even within the smaller class of $\cfwp$
semigroups, one can find semigroups that are not automatic:

\begin{example}
  \label{eg:nonauto}
  Let $A = \{a,b,c,d,z\}$, let $\drel{R} = \{(ab^\alpha c^\alpha d,z) : \alpha \in \nset\}$. Let $M$ be the monoid
  presented by $\pres{A}{\drel{R}}$. Then $M$ is $\cfwp$ by \fullref{Theorem}{thm:contextfreesrs}, but cannot be
  automatic \cite[Corollary~5.5]{campbell_autsg}. (In fact, it can be shown that $M$ is not even asynchronously automatic.)
\end{example}

Given that $\cfwp$ groups are virtually free and thus automatic, and since the monoid in \fullref{Example}{eg:nonauto}
is not cancellative, the following question is natural:

\begin{question}
Is a cancellative $\cfwp$ semigroup necessarily automatic?
\end{question}

\section{Direct products}
\label{sec:dirprod}

A direct product of two finitely generated semigroups is not necessarily finitely generated.  However, a direct product
of two $\cfwp$ semigroups is not necessarily $\cfwp$, even if it is finitely generated: for example, the free monoid of
rank $1$ is $\cfwp$, but the direct product of two copies of this monoid is the free commutative monoid of rank $2$,
which is finitely generated but not $\cfwp$.

For a semigroup $S$, we say that $S$ is \emph{decomposable} if $S^2 = S$.  We will show that for a direct product of two
$\cfwp$ semigroups to be $\cfwp$, it is necessary and sufficient that one of the factors is finite and decomposable
(decomposability being necessary to ensure finite generation).  First we establish sufficiency.

\begin{lemma}
  \label{dirsuf}
  The classes of $\cfwp$ and $\dcfwp$ semigroups are closed under taking direct product with a finite decomposable
  semigroup.
\end{lemma}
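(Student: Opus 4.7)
The plan is to build a finite generating set $B$ for $S \times T$ using the decomposability of $T$, and then to express $\WP(S \times T, B)$ as the intersection of an inverse homomorphic image of $\WP(S, A)$ (which will inherit membership in $\CF$ or $\DCF$) with two regular languages. Closure of $\CF$ and $\DCF$ under intersection with regular languages will then deliver the lemma.

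Fix a finite generating set $A$ for $S$ and set $B = A \times T$. The hypothesis $T^2 = T$ gives $T^n = T$ for all $n \geq 1$ by an obvious induction, so given $(s, t) \in S \times T$ and any factorisation $s = a_1 \cdots a_n$ over $A$ one can choose $t_1, \ldots, t_n \in T$ with $t_1 \cdots t_n = t$ and write $(s, t) = (a_1, t_1) \cdots (a_n, t_n)$; hence the finite set $B$ generates $S \times T$.

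Let $\pi_1 : B \to A$ and $\pi_2 : B \to T$ be the projections, and extend them to homomorphisms $\Phi, \Psi$ on $(B \cup \{\#\})^*$ that fix $\#$. Because the $\pi_i$ commute with reversal, for $u, v \in B^+$ the relation $u =_{S \times T} v$ is equivalent to $\Phi(u \# v^\rev) \in \WP(S, A)$ together with $\Psi(u \# v^\rev) \in \WP(T, T)$, giving
\[
\WP(S \times T, B) = \Phi^{-1}(\WP(S, A)) \cap \Psi^{-1}(\WP(T, T)) \cap (B^+ \# B^+).
\]
The first factor lies in $\CF$ (resp.\ $\DCF$), since $\WP(S, A)$ does and both classes are closed under inverse homomorphism. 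Since $T$ is finite, $\WP(T, T)$ is regular, so the second factor is regular; the third factor is patently regular. Closure of $\CF$ and $\DCF$ under intersection with regular languages then yields the result.

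The main bookkeeping task is to verify the displayed identity above --- in particular that the reversals are absorbed correctly into $\Phi$ and $\Psi$ so that the $S$-condition and $T$-condition split cleanly. The only moderately delicate closure invoked is that of $\DCF$ under inverse homomorphism, which is standard.
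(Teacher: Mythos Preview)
Your proof is correct and follows essentially the same idea as the paper's: both arguments exploit that the $T$-component can be tracked by finite information while the $S$-component is handled by whatever recognises $\WP(S,A)$. The paper does this by directly modifying a pushdown automaton for $\WP(S,A)$, storing the running $T$-product in the state; you achieve the same effect via closure properties, writing $\WP(S\times T,B)$ as an inverse homomorphic image of $\WP(S,A)$ intersected with the regular language $\Psi^{-1}(\WP(T,T))$. Your formulation is arguably cleaner for the $\DCF$ case, since it appeals to the standard closure of $\DCF$ under inverse homomorphism and intersection with regular sets rather than requiring one to check that the automaton modification preserves determinism; conversely, the paper's version is slightly more self-contained. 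One small note: the intersection with $B^+\#B^+$ in your displayed identity is in fact redundant, since $\Phi^{-1}(\WP(S,A))$ already lies in $B^+\#B^+$ (as $\Phi$ is letter-to-letter and preserves $\#$), but including it does no harm.
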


\begin{proof}
Let $S$ be a $\cfwp$ semigroup and $T$ a finite decomposable semigroup.
Then $S\times T$ is finitely generated (\cite[Theorem~8.2]{robertson_dirprod}).
Let $C$ be a finite generating set for $S \times T$ and let $A$ and $B$ be
the projections of $C$ onto $S$ and $T$ respectively.  Then $A$ and $B$
are finite generating sets for $S$ and $T$ respectively.
Thus there exists a pushdown automaton $\A$ recognising $\WP(S,A)$,
which can be modified to give a pushdown automaton $\A'$ recognising
$\WP(S\times T, A\times B)$, by processing the symbols from $A$ as usual, while
using the states to record the finite information required to check validity of the
input on the second tape.  Hence $S\times T$ is $\cfwp$.
Moreover, if $S$ is $\dcfwp$, then $\A$ can be taken to be deterministic, in which
case $\A'$ is also deterministc, so $S\times T$ is $\dcfwp$.
\qed
\end{proof}

Necessity arises from the following language-theoretic result, which encapsulates
the idea that context-free languages cannot admit
`cross-dependencies'.
For words $w,w'$, we use the notation $w'\sqsubseteq w$ to mean that $w'$ is a subword of $w$.

\begin{lemma}
  \label{uvuv}
  Let $A$ and $B$ be disjoint alphabets, and let $\rho_A$, $\rho_B$ be equivalence relations on $A^*$ and $B^*$
  respectively with infinitely many equivalence classes.  Then the language
  $L(\rho_A, \rho_B) = \gset{ u_1 v_1 u_2 v_2 } { (u_1,u_2)\in \rho_A, (v_1,v_2)\in \rho_B}$ is not context-free.
\end{lemma}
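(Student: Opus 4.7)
The plan is to apply the pumping lemma for context-free languages to a carefully chosen word in $L(\rho_A,\rho_B)$, deriving a contradiction by exhibiting a representative of some equivalence class strictly shorter than our chosen minimum-length representative. Suppose $L(\rho_A,\rho_B)$ is context-free with pumping constant $N$. Since $A^*$ contains only finitely many words of length at most $N$ but $\rho_A$ has infinitely many classes, at least one $\rho_A$-class consists entirely of words of length strictly greater than $N$; fix $u \in A^+$ to be a shortest element of such a class, and choose $v \in B^+$ analogously for $\rho_B$. Apply the pumping lemma to $w = uvuv \in L(\rho_A,\rho_B)$ to obtain $w = \alpha\beta\gamma\delta\varepsilon$ with $|\beta\gamma\delta| \leq N$, $|\beta\delta| \geq 1$, and $\alpha\beta^i\gamma\delta^i\varepsilon \in L(\rho_A,\rho_B)$ for every $i \geq 0$.

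Every word in $L(\rho_A,\rho_B)$ has at most three alternations between $A$-letters and $B$-letters, since $A \cap B = \emptyset$ and the word has the shape $u_1 v_1 u_2 v_2$. Hence neither $\beta$ nor $\delta$ can mix letters from $A$ and $B$: if, say, $\beta$ contains letters from both alphabets, then $\beta^i$ has at least $2i-1$ internal alternations, and taking $i$ large enough forces $\alpha\beta^i\gamma\delta^i\varepsilon$ to exceed the three-alternation budget. Moreover, because $|\beta\gamma\delta| \leq N < \min(|u|,|v|)$, the substring $\beta\gamma\delta$ is confined to at most two consecutive blocks of the natural decomposition $u \cdot v \cdot u \cdot v$ of $w$.

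The key step is to examine the pumped-down word $w_0 = \alpha\gamma\varepsilon \in L(\rho_A,\rho_B)$. Because each block of $w$ has length exceeding $N$ while $|\beta\delta| \leq N$, no block is entirely deleted, so $w_0$ retains the full $A^+ B^+ A^+ B^+$ block structure. Since $A$ and $B$ are disjoint, this forces any decomposition $w_0 = u_1'' v_1'' u_2'' v_2''$ (with $u_i'' \in A^*$, $v_i'' \in B^*$) to align with the four maximal homogeneous blocks of $w_0$, and hence to be unique. A short case analysis on which one or two consecutive blocks contain $\beta\gamma\delta$ shows that at least one of $u_1'', v_1'', u_2'', v_2''$ is strictly shorter than its counterpart in $w$, while its partner on the opposite half of $w_0$ still equals $u$ or $v$ verbatim. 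The relation $u_1'' \mathrel{\rho_A} u_2''$ or $v_1'' \mathrel{\rho_B} v_2''$ then places a word strictly shorter than $u$ into $[u]_{\rho_A}$, or strictly shorter than $v$ into $[v]_{\rho_B}$, contradicting minimality.

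The main care point, and the place where cleanness matters most, is verifying uniqueness of the decomposition of $w_0$: this relies on $A \cap B = \emptyset$ together with the observation that no block is emptied by the deletion, so that the block boundaries of $w_0$ are unambiguously determined and no reshuffled factorization can evade the length contradiction. Everything else is routine bookkeeping.
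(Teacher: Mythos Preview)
Your proof is correct and follows essentially the same strategy as the paper's: apply the pumping lemma, observe that $\beta$ and $\delta$ each lie in a single block and cannot both touch the two $A$-blocks or both touch the two $B$-blocks, so that pumping down leaves at least one $u$-block and one $v$-block intact, and hence the equivalence classes are preserved.

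The difference is in how the contradiction is extracted. The paper does not pick minimal-length representatives; instead it takes an arbitrary $u_1v_1u_2v_2 \in L$ with $|v_1|,|u_2|>k$, pumps down once, and then \emph{iterates} until one of the middle blocks has length at most $k$, concluding that every element of $L$ has its $\rho_A$-class or its $\rho_B$-class among the finitely many classes containing a short word. Your version front-loads the work by choosing $u$ and $v$ as shortest representatives of classes with no short words, so a single pump-down already yields the contradiction. This is a genuine simplification: it avoids the induction, and your insistence that all four blocks have length exceeding $N$ guarantees that no block is emptied, which makes the decomposition of $w_0$ unique and the case analysis clean. The paper's version is slightly more flexible in that it only needs the two middle blocks to be long, but it is correspondingly less explicit about why the ``natural'' post-deletion factorisation is the one witnessing membership in $L$.
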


\begin{proof}
Suppose that $L = L(\rho_A,\rho_B)$ is context-free, and let $k$ be the pumping constant for $L$.
Let $\mathcal{E}_A$ be the set of all equivalence classes of $\rho_A$ that contain a word of length
at most $k$, and define $\mathcal{E}_B$ similarly.

Let $w = u_1v_1u_2v_2\in L$ with $|v_1|, |u_2|>k$.  Then we can write
$w = pqrst$ where $|qrs|\leq k$, $|qs|\geq 1$ and $pq^i rs^i t\in L$ for all $i\in \N_0$.
Due to the form of words in $L$, $q$ and $s$ must each be a subword of some
$u_i$ or $v_i$.  Moreover, the lengths of $u_2$ and $v_1$ preclude the possibility
that $q\sqsubseteq u_1$ and $s\sqsubseteq u_2$ or $p\sqsubseteq v_1$ and $q\sqsubseteq v_2$.
Let $w' = prt = u_1' v_1' u_2' v_2'$.  Then we have $u_i' = u_i$ for some $i\in \set{1,2}$
and $v_j' =  v_j$ for some $j\in \set{1,2}$.  Since $w'\in L$, this implies that
the equivalence classes of the factors are unchanged between $w$ and $w'$.
By induction, we can repeat this process until we obtain a word
$w^\flat = u_1^\flat v_1^\flat u_2^\flat v_2^\flat\in L$ with $|v_1^\flat|\leq k$ or
$|u_2^\flat|\leq k$, where the $u_i^\flat$ are in the same $\rho_A$-equivalence class
as the $u_i$ and the $v_i^\flat$ are in the same $\rho_B$-equivalence class
as the $v_i$.
Hence our original word $w$ had either $u_i\in C$ for some $C\in \mathcal{E}_A$ or
$v_i\in D$ for some $D\in \mathcal{E}_B$.
But $\mathcal{E}_A$ and $\mathcal{E}_B$ are both finite, and so $L$ cannot contain all words
of the form $u_1v_1u_2v_2$ with $(u_1,u_2)\in \rho_A$ and $(v_1,v_2)\in \rho_B$.
Hence $L$ is not context-free.
\qed
\end{proof}

The preceding lemma is immediately applicable only to monoids.

\begin{lemma}
  \label{dirprodmon}
  The direct product of two infinite monoids cannot be $\cfwp$.
\end{lemma}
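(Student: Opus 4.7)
The plan is to extract from WP, via CF-preserving operations (intersection with a regular language and a length-decreasing homomorphism), a context-free language $K$ of shape $A^+B^+A^+B^+$ whose non-context-freeness will follow by a pumping argument analogous to \fullref{Lemma}{uvuv}.

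Assume $M\times N$ is $\cfwp$.  Then $M\times N$ is finitely generated, so both $M$ and $N$ are finitely generated (as projection images).  Pick disjoint finite generating sets $A$ for $M$ and $B$ for $N$, viewed inside $M\times N$ via $a\mapsto(a,1_N)$ and $b\mapsto(1_M,b)$; by \fullref{Proposition}{prop:changegensubsemigroups}, $\WP(M\times N,A\cup B)\in\CF$.  Intersect with the regular language $R=A^+B^+\#A^+B^+$ and apply the homomorphism $h$ that deletes $\#$; since $u_1v_1=(u_1,v_1)$ and $(u_2v_2)^\rev=v_2^\rev u_2^\rev=(u_2^\rev,v_2^\rev)$ in $M\times N$, one obtains
\[
K:=h\bigl(\WP(M\times N,A\cup B)\cap R\bigr)=\{u_1v_1u_2v_2:u_i\in A^+,\,v_i\in B^+,\,u_1=_M u_2^\rev,\,v_1=_N v_2^\rev\},
\]
which is context-free under the hypothesis.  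Showing $K$ is not context-free will complete the argument.

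For the pumping step, use that $M$ and $N$ are infinite and finitely generated to choose $u\in A^+$ and $v\in B^+$ that are shortest in their respective $=_M$- and $=_N$-classes with $|u|,|v|$ exceeding the pumping constant (such classes exist since only finitely many classes contain a short word but infinitely many classes exist).  The word $w=uvu^\rev v^\rev$ lies in $K$.  Analyse a pumping decomposition $w=pqrst$: the alphabet-alternation argument forces each of $q,s$ into a single one of the four segments $u,v,u^\rev,v^\rev$, and length constraints restrict $qs$ to at most two adjacent such segments.  In each of the resulting seven cases, pumping down to $m=0$ produces a word strictly shorter than $u$ (respectively $v$) which, by the fixed neighbouring segments together with the defining identities $u_1=_M u_2^\rev$ and $v_1=_N v_2^\rev$, is forced into $[u]_M$ (or its reversal) or into $[v]_N$ (or its reversal) --- contradicting the choice of $u,v$ as shortest in class.

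The main obstacle is the case where $q\sqsubseteq v$ and $s\sqsubseteq u^\rev$, i.e.\ where pumping straddles the middle boundary of $w$ and modifies two segments at once.  The choice of $R=A^+B^+\#A^+B^+$ (rather than the more obvious $A^+B^+\#B^+A^+$) is made precisely to make this case work: after deleting $\#$ the four segments alternate $A,B,A,B$, so even in the straddling configuration the fixed first and fourth segments provide cross-couplings that separately force the reverse of the pumped third segment into $[u]_M$ and the pumped second segment into $[v]_N$; since $|qs|\geq 1$ at least one is strictly shorter than its unpumped counterpart, giving the contradiction.  Had we instead intersected WP with $A^+B^+\#B^+A^+$, straddling the $\#$ after deleting it would modify two adjacent $B$-segments with no fixed $A$-segment between them, the cross-coupling would be lost, and the argument would break down.
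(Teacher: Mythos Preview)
Your proof is correct and follows the same overall strategy as the paper's: intersect the word problem with a regular language of shape $A^*B^*\#A^*B^*$, apply a homomorphism deleting $\#$, and then use the pumping lemma on the resulting $ABAB$-shaped language.

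The difference is in how the pumping step is carried out. The paper packages this step as \fullref{Lemma}{uvuv}, which is stated for \emph{equivalence relations} $\rho_A,\rho_B$ and whose proof tracks equivalence classes through an iterated pumping-down. You instead do the pumping directly: you choose $u$ and $v$ to be shortest representatives of classes that contain no short word, so that a single pumping-down step already yields a contradiction without any iteration. This is arguably cleaner here, because the relations that actually arise after deleting $\#$ are $(u_1,u_2)\mapsto u_1=_M u_2^{\rev}$ and $(v_1,v_2)\mapsto v_1=_N v_2^{\rev}$, which are not equivalence relations in general; your argument does not need them to be, whereas the paper's invocation of \fullref{Lemma}{uvuv} glosses over this point. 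Your closing remark about why $R=A^+B^+\#A^+B^+$ (rather than $A^+B^+\#B^+A^+$) is the right choice is also apt: with the latter, the two $B$-blocks would merge after deleting $\#$, the parsing into four segments would be lost, and the straddling case would no longer give a contradiction.
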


\begin{proof}
Let $S = \langle A\rangle$ and $T = \langle B\rangle$ be infinite monoids.
Then the relations $\rho_A = \iota(S,A)$ and $\rho_B = \iota(T,B)$ both have
infinitely many equivalence classes.
Moreover, the language $L = \WP(S\times T, A\cup B)\cap A^*B^* \# A^*B^*$
has as a homomorphic image the language $L(\rho_A,\rho_B)$ defined in
\fullref{Lemma}{uvuv}.  Since the class of context-free languages is closed under
homorphisms and intersection with regular sets, this implies that $S\times T$
is not $\cfwp$.

Thus if $S\times T$ is $\cfwp$, then at least one of $S$ or $T$ is finite.
\qed
\end{proof}

In order to extend \fullref{Lemma}{dirprodmon} to all semigroups, we first establish the following fact (which is
clear for monoids, where direct factors are submonoids).

\begin{lemma}
  \label{dirfactor}
  The class of $\cfwp$ semigroups is closed under taking direct factors.
\end{lemma}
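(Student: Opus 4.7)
The plan is to realize $S$ as a finitely generated subsemigroup of $S\times T$ and then invoke \fullref{Proposition}{prop:changegensubsemigroups}. For monoids the embedding $m\mapsto(m,1_N)$ makes this immediate, but in the semigroup setting the map $s\mapsto(s,t_0)$ is a homomorphism only when $t_0$ is an idempotent of $T$. I would therefore split into two cases according to whether $T$ contains an idempotent.

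If $T$ contains an idempotent $e$, then I would take any finite generating set $C$ for $S\times T$ and let $A$ be its projection onto $S$, which is a finite generating set for $S$. The set $\{(a,e):a\in A\}$ is a finite subset of $S\times T$ generating the subsemigroup $S\times\{e\}$, which is isomorphic to $S$ via the first projection. Applying \fullref{Proposition}{prop:changegensubsemigroups} to this finitely generated subsemigroup of the $\cfwp$ semigroup $S\times T$ then yields that $S$ is $\cfwp$.

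If $T$ has no idempotent, then $T$ is necessarily infinite, and the plan is to argue that $S$ must be finite---and hence trivially $\cfwp$. The key structural claim is that $T$ must be indecomposable: a finitely generated decomposable semigroup must contain an idempotent, by iterating the decomposability relation on a generator to obtain factorizations over the generating set of arbitrarily large length, and then applying a pigeonhole-style argument. Once $T$ is known to be indecomposable, Robertson's characterization of when a direct product of semigroups is finitely generated (cited in the proof of \fullref{Lemma}{dirsuf}) forces $S$ to be finite, since $T$ is infinite and indecomposable.

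The main obstacle is the structural claim underpinning the second case: verifying that a finitely generated decomposable semigroup necessarily contains an idempotent. If this route proves too delicate, an alternative is to avoid the case split by adjoining an identity to $T$ and transferring $\cfwp$-ness from $S\times T$ to $S\times T^1$, thereby producing an idempotent $1_T$ artificially and reducing to the first case; but such a transfer itself would require its own nontrivial argument relating the word problems of $S\times T$ and $S\times T^1$.
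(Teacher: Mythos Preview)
Your Case~1 is correct and clean: if $T$ has an idempotent $e$, then $S\times\{e\}$ is a finitely generated subsemigroup of $S\times T$ isomorphic to $S$, and \fullref{Proposition}{prop:changegensubsemigroups} finishes the job.

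The gap is in Case~2. Your structural claim --- that a finitely generated decomposable semigroup must contain an idempotent --- is false. There exist $2$-generated \emph{simple} semigroups without idempotents (see for instance Byleen, \emph{Glasgow Math.\ J.}\ \textbf{30} (1988), 121--128, who embeds any countable idempotent-free semigroup into such a semigroup). A simple semigroup $T$ always satisfies $T^2=T$, since $T^2$ is a two-sided ideal; so these examples are finitely generated, decomposable, and idempotent-free. Your pigeonhole sketch (``iterate decomposability to get arbitrarily long factorizations, then pigeonhole'') therefore cannot be completed: arbitrarily long factorizations of a generator certainly exist, but there is no finite state space on which to pigeonhole. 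Consequently your dichotomy collapses: by Robertson--Ru\v{s}kuc--Wiegold, $T$ \emph{is} decomposable whenever $S\times T$ is finitely generated, so you are always in the situation your Case~2 was meant to exclude, yet $T$ need not have an idempotent.

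The paper sidesteps this entirely. Rather than seeking an idempotent, it extracts from finite generation plus $T^2=T$ only the weaker fact that some generator $c$ satisfies $cw=c$ for a suitable word $w$ over the generators (via a finite functional digraph on the generating set, which must contain a cycle). This is not an idempotent, but it is enough: the map sending $b_1b_2\cdots b_m\in B^+$ to $(b_1,c)(b_2,w)\cdots(b_m,w)$ lands on $(b_1\cdots b_m,c)$, so one can carve $\WP(S,B)$ out of $\WP(S\times T,\,\text{extended generators})$ by intersecting with the regular language $(B\times\{c\})(B\times\{w\})^*\#(B\times\{w\})^*(B\times\{c\})$ and then applying a homomorphism. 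This uses closure of context-free languages under homomorphic image, which your subsemigroup approach does not need --- but in exchange it works uniformly, with no case split and no appeal to idempotents.

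Your proposed alternative (pass from $S\times T$ to $S\times T^1$) is exactly the kind of transfer the paper carries out later, in the proof of the main direct product theorem, and it is indeed the most delicate part of that section; it would not shorten the argument here.
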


\begin{proof}
  Assume that $S \times T$ is $\cfwp$. In particular, $S \times T$ is finitely generated. By
  \cite[Theorem~2.1]{robertson_dirprod}, $S$ and $T$ are finitely generated, and $S^2 = S$ and $T^2 = T$.

  Let $C = \set{c_1,\ldots,c_k}$ be a finite generating set for $T$. Since $T^2 = T$, we can choose a factorization
  $c_i = c_{i\zeta}u_i$ for each $c_i \in C$. Construct a labelled digraph with vertex set $C$ and an edge from $c_i$ to
  $c_{i\zeta}$ labelled by $u_i$ for each $c_i \in C$. Since this digraph is finite, it must contain a circuit. Fix some
  vertex $c$ on that circuit and let $w$ be the concatenation in reverse order of the labels on the edges around the
  circuit. Then $cw = c$.

  Let $A$ be a finite generating set for $S\times T$ and let $B$ be a finite generating set for $S$. Then
  $X = A \cup \parens{B \times \set{c,w}}$ is a finite generating set for $S \times T$. Let $R$ be the regular language
  $(B \times \set{c})(B \times \set{w})^*\#(B \times \set{w})^*(B \times \set{c})$. Let
  $L = \WP(S \times Y,X) \cap R$. Then
  \begin{equation}
    \label{eq:directfactorproof1}
    \begin{aligned}
      &(b_1,c)(b_2,w)\cdots(b_m,w)\#(b'_n,w)\cdots(b'_2,w)(b_1,c) \in L \\
      \iff{}& (b_1b_2\cdots b_m,cw^{m-1}) =_{S\times T} (b'_1b'_2\cdots b'_n,cw^{n-1}) \\
      \iff{}& (b_1b_2\cdots b_m,c) =_{S\times T} (b'_1b'_2\cdots b'_n,c) \\
      \iff{}& b_1b_2\cdots b_m =_{S} b'_1b'_2\cdots b'_n.
    \end{aligned}
  \end{equation}
  Define a homomorphism
  \[
  \pi : \parens[\big]{(B \times \set{c,w}) \cup \set{\#}} \to \parens[\big]{B \cup \set{\#}},\qquad (b,\text{\textvisiblespace}) \mapsto b,\quad \#\mapsto \#.
  \]
  Then \eqref{eq:directfactorproof1} shows that
  $L\pi = \WP(S,B)$. Since the class of context-free languages is closed under homomorphism \cite[Corollary to
  Theorem~6.2]{hopcroft_automata}, $S$ is a $\cfwp$ semigroup.  \qed
\end{proof}

\begin{theorem}
The direct product of two semigroups is $\cfwp$ if and only if it is finite or one of the factors is
$\cfwp$ and the other factor is finite and decomposable.
\end{theorem}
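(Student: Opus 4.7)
The proof plan is as follows. For sufficiency, if $S \times T$ is finite, its word problem is regular (hence in $\CF$); if instead one factor is $\cfwp$ and the other is finite and decomposable, \fullref{Lemma}{dirsuf} applies directly.

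For necessity, suppose $S \times T$ is $\cfwp$ and, without loss of generality, infinite (the finite case being covered by the first clause). Context-freeness of the word problem implies $S \times T$ is finitely generated; by \cite[Theorem~2.1]{robertson_dirprod}, $S$ and $T$ are then finitely generated with $S^2 = S$ and $T^2 = T$. \fullref{Lemma}{dirfactor} gives that $S$ and $T$ are themselves $\cfwp$. What remains is to show that at least one of $S, T$ must be finite; once that is established, the finite factor is automatically decomposable and the other plays the role of the $\cfwp$ factor in the statement.

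Showing that $S$ and $T$ cannot both be infinite is the heart of the argument and amounts to extending \fullref{Lemma}{dirprodmon} to the semigroup setting. The monoid proof embedded $A$ and $B$ as commuting subsemigroups of $S \times T$ via $a \mapsto (a, 1_T)$ and $b \mapsto (1_S, b)$, which fails for semigroups lacking identities. My plan is to work instead with the generating set $A \times B$, which generates $S \times T$ precisely because $S^2 = S$ and $T^2 = T$ ensure that every element of each factor admits representatives of every positive length. Applying the fixed-point construction from the proof of \fullref{Lemma}{dirfactor} separately to $S$ and $T$, I would pick $a_0 \in A$ and $y \in A^+$ with $a_0 y =_S a_0$, and $b_0 \in B$ and $x \in B^+$ with $b_0 x =_T b_0$. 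After replacing $y, x$ by suitable common-length powers, these fixed points define regular languages $R_A, R_B \subseteq (A \times B)^*$ whose words represent respectively the elements $(s, b_0)$ for $s \in S$ arbitrary, and $(a_0, t)$ for $t \in T$ arbitrary. The contradiction would then come by intersecting $\WP(S\times T, A \times B)$ with the regular language $R_A R_B R_A R_B \# (R_A R_B R_A R_B)^\rev$ and extracting, via a homomorphism that records the $S$-coordinate on $R_A$-letters and the $T$-coordinate on $R_B$-letters, a language of the shape excluded by \fullref{Lemma}{uvuv}.

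The main obstacle I anticipate is twofold. First, the reversal built into the definition of $\WP$ means the condition emerging from $u \# v^\rev$ is `half-reversed' relative to the form demanded by \fullref{Lemma}{uvuv}, so I expect to need a minor reversal-tolerant variant of that lemma proved by an essentially identical pumping argument. Second, the intervening padding elements $a_0, b_0$ between the extracted blocks must not collapse the induced equivalences on $A^*$ and $B^*$ down to finitely many classes, i.e., the sets $Sa_0$ and $b_0 T$ (or the finer images arising in the extraction) must remain infinite. I intend to handle the second point by choosing $a_0, b_0$ carefully using the labelled digraph from the proof of \fullref{Lemma}{dirfactor}; indeed, if every admissible choice of $a_0, b_0$ produces a finite image then that finiteness itself translates into one of $S, T$ being finite, still yielding the desired conclusion.
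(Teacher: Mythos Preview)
Your sufficiency argument and the opening reductions for necessity (finite generation, decomposability via \cite{robertson_dirprod}, both factors $\cfwp$ via \fullref{Lemma}{dirfactor}) match the paper. The divergence is in how to show that $S$ and $T$ cannot both be infinite.

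The paper does \emph{not} attempt to adapt \fullref{Lemma}{uvuv} directly to the semigroup setting. Instead it adjoins identities: from a PDA for $\WP(S\times T, C)$ it builds, by an explicit (and somewhat delicate) construction, a PDA for $\WP(S^1\times T^1, C_1)$ with $C_1 = (A\times\{1\})\cup(\{1\}\times B)\cup C$. The trick is a ``delayed'' simulation that stores one input symbol in the state, together with guesses and precomputed representatives $w_{x,x',y}$, $w_{x,y,y'}$ that let the automaton absorb a symbol from $A\times\{1\}$ or $\{1\}\times B$ into an adjacent symbol from $C$. Once $S^1\times T^1$ is known to be $\cfwp$, \fullref{Lemma}{dirprodmon} applies verbatim and forces one factor finite.

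Your route is more direct and avoids the PDA engineering, but your second obstacle is a genuine gap rather than a technicality. The digraph of \fullref{Lemma}{dirfactor} guarantees \emph{some} generator $a_0$ with $a_0 y = a_0$, but nothing prevents $Sa_0$ (or the relevant two-sided version $a_0 S a_0$ that actually appears once you interleave $R_A R_B R_A R_B$) from being finite; your fallback claim that ``if every admissible choice collapses then one of $S,T$ is finite'' is not established and is not obvious. The admissible $a_0$ form a restricted subset of the generators, and finiteness of $Sa_0$ for each of them does not transparently force $S$ finite. You would need an additional argument---for instance, showing that some generator lying in an infinite principal ideal can always be chosen on a cycle of the digraph---and it is not clear this is available in general. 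The paper's adjoin-identities manoeuvre sidesteps exactly this difficulty: with genuine identities present, the embeddings $a\mapsto(a,1_T)$ and $b\mapsto(1_S,b)$ are injective and never collapse, so \fullref{Lemma}{uvuv} applies without further case analysis. Your first obstacle (the reversal) is comparatively minor and, as you say, handled by a symmetric variant of the pumping argument.
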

\begin{proof}
Sufficiency was already established in \fullref{Lemma}{dirsuf}.

Conversely suppose that $S\times T$ is $\cfwp$.  Let $C$ be a finite generating set
for $S\times T$ with the projection of $C$ onto the first component being $A$ and
the projection onto the second component $B$.
By \fullref{Lemma}{dirfactor}, $S$ and $T$ are both $\cfwp$.
Let $A_1 = A\times \{1\}$, $B_1 = \{1\}\times B$,
and $C_1 = A_1\cup B_1\cup C$.
We will describe a pushdown automaton $\P$ recognising
$\WP(S^1\times T^1, C_1)$.  This automaton is defined in terms of pushdown automata
$\A$, $\B$ and $\C$, recognising $\WP(S,A)$, $\WP(T,B)$ and
$\WP(S\times T, C)$ respectively.

On input $(x,y)\in C$, the automaton $\P$ behaves as a `delayed' version of $\C$,
storing the input symbol in the state and then (except in the start state, which has no
stored symbol) simulating $\C$ on input of the current stored symbol.
The automaton may guess at any point that the input is complete, and process
the stored symbol from the current state as an $\epsilon$-transition.  In this case
we move to a state with no stored symbol and accepting no further input, which is
a final state if and only if it is a final state in $\C$.  Thus on input in $(C\cup \{\#\})^*$,
$\P$ behaves exactly like $\C$ but `one step behind', and so the sublanguage of
$(C\cup \{\#\})^*$ accepted by $\P$ is $\WP(S\times T, C)$.

In order to work with input from $A_1\cup B_1$ we choose, for all $x,x'\in A$
and $y,y'\in B$, representatives $w_{x,x',y}$ and $w_{x,y,y'}$ in $C$
for the elements $(xx',y)$ and $(x,yy')$ of $S\times T$.

Now, if the automaton $\P$ reads the symbol $(x',1)$ in a state with
stored symbol $(x,y)$, it simulates reading all but the final symbol of
$w_{x,x',y'}$ in $\C$ from the current state, and stores the final symbol
in the last state of this computation.
Symmetrically, the same occurs when we replace $(x',1)$ by $(1,y')$
and $w_{x,x',y}$ by $w_{x,y,y'}$.
Thus on input $u\#v$ from $CC_1^*\# CC_1^*$, the automaton is able to
simulate processing in $\C$ some $u'\# v'$ such that $u =_{S\times T} u'$
and $v =_{S\times T} v'$.

Finally, on input from $A_1$ or $B_1$ in the start state, the automaton
guesses whether the remaining (non-$\#$) input will be in $A_1^*$ resp.\ $B_1^*$.
If it guesses yes, it moves to a copy of the appropriate automaton $\A$
resp.\ $\B$, treating input $(x,1)$ as $x$ and $(1,y)$ as $y$.
Thus the sublanguage of $(A_1\cup \set{\#})^*$ recognised by $\P$ is
$\WP(S\times \{1\}, A_1)$, while the sublanguage of $(B_1\cup \set{\#})^*$
recognised is $\WP(\{1\}\times T, B_1)$.
If, on the other hand, the automaton guesses no, we describe what happens
on input from $A_1$, the other case being symmetric.
Supposing the input is $(x,1)$, the automaton guesses which $y\in B$ will
be read next, and stores this guess in the state, along with the symbol $(x,y)$.
States with a stored guess $y\in B$ operate as usual, except on input
of the form $(x,y)$.  On such input, the automaton deletes the `guess' $y$
and otherwise operates as if the input were $(x,1)$, since it already simulated
reading $y$ earlier.  (If $x=1$, then we simply delete the guess and otherwise do nothing.)
The automaton must similarly make a guess on input from $A_1$ or $B_1$ in
a state with stored symbol $\#$.
Since $(x,1)w(x',y) = (x,y) w (x',1)$ for $w\in A_1^*$, the automaton $\P$ is now
able to simulate reading a corresponding word in $C^*$ for any input not in
$(A_1\cup \{\#\})^*\cup (B_1\cup \{\#\})^*$.  Combined with the fact that $\P$
can also simulate the automata $\A$ and $\B$ on appropriate inputs,
this establishes that $\P$ recognises $\WP(S^1\times T^1, C_1)$.

Thus $S^1\times T^1$ is $\cfwp$, and so by \fullref{Lemma}{dirprodmon},
without loss of generality we can assume $T^1$ is finite.
Moreover, $S^1$ is $\cfwp$, and hence so is $S$, by
Proposition~\ref{prop:changegensubsemigroups}.2.
By \cite[Theorem~8.1]{robertson_dirprod}, if $S$ is infinite
then $T$ must also be decomposable, since $S\times T$ is finitely generated.
\qed
\end{proof}

\section{Free products}
\label{sec:freeprod}

\begin{theorem}
  \label{thm:freeprodsemigroups}
  The class of $\cfwp$ semigroups is closed under taking semigroup free products and under taking free factors.
\end{theorem}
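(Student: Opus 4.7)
The plan is to handle the two halves of the statement separately. Closure under taking free factors will follow from \fullref{Proposition}{prop:changegensubsemigroups}(2), while closure under forming free products requires a direct pushdown automaton construction exploiting the last-in-first-out structure of the free product normal form.

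For the free factor direction, suppose $S*T$ is $\cfwp$ and let $C$ be a finite generating set. Each factor embeds as a subsemigroup of $S*T$, and an element of $S*T$ lies in $S$ exactly when its normal form consists of a single $S$-syllable. If $s\in S$ is written as $c_1\cdots c_k$ with $c_i\in C$, then concatenating the normal forms of the $c_i$ cannot eliminate any $T$-syllable present in some $c_i$ (adjacent same-type syllables merge but never vanish, since semigroups have no identity), so every $c_i$ must itself be a pure $S$-element. Hence $C\cap S$ generates $S$, so $S$ is a finitely generated subsemigroup of $S*T$ and is $\cfwp$ by \fullref{Proposition}{prop:changegensubsemigroups}(2); the argument for $T$ is symmetric.

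For the free product direction, take disjoint finite generating sets $A$ and $B$ for $S$ and $T$, and pushdown automata $\pda{A}$ and $\pda{B}$ (accepting by empty stack) for $\WP(S,A)$ and $\WP(T,B)$. Every word over $A\cup B$ decomposes uniquely into maximal blocks alternating between $A^+$ and $B^+$, and two such words are equal in $S*T$ exactly when their block decompositions share an alphabet pattern and corresponding blocks are equal in the appropriate factor. A word in $\WP(S*T,A\cup B)$ therefore has the form
\[
 u_1 u_2 \cdots u_n\,\#\, v_n^\rev v_{n-1}^\rev \cdots v_1^\rev,
\]
with pairs $(u_i,v_i^\rev)$ nested in LIFO order, a structure tailor-made for a pushdown stack.

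I would construct a PDA $\pda{P}$ that, on the $u$-side, tracks the current alphabet in its state: within a block it simulates the $u$-phase of whichever of $\pda{A},\pda{B}$ matches the alphabet, building on the stack the segment representing that block; at each alphabet change it writes a separator marker encoding the current state of the base PDA (so it can later be restored) and resets to the initial configuration of the other base PDA. After consuming the input $\#$, the stack carries the segments for $u_1,\ldots,u_n$ separated by markers. On the $v^\rev$-side, $\pda{P}$ simulates the $\#$- and $v^\rev$-transitions of whichever base PDA matches the current alphabet, popping the topmost segment; when the current base PDA reaches an accepting empty-segment configuration, an $\epsilon$-transition pops the separator marker and hands off to the other base PDA's $v^\rev$-phase with the recorded state restored. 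The input is accepted exactly when the stack ends empty, which holds iff $u_i = v_i$ in the corresponding factor for every $i$. The main technical obstacle is this hand-off at block boundaries on the $v^\rev$-side: one must check that the $\epsilon$-transitions popping markers fire precisely when the current base PDA has legitimately finished verifying its block, and that control resumes consistently with the original base PDAs' computations. These guesses are essentially nondeterministic, which is why the same argument does not obviously yield $\dcfwp$-closure.
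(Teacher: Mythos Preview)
Your proposal is correct and follows essentially the same route as the paper: the free-factor direction via \fullref{Proposition}{prop:changegensubsemigroups}(2) (you spell out why $C\cap S$ generates $S$, which the paper leaves implicit), and the free-product direction via a pushdown automaton that nests simulations of the factor PDAs on a single stack, pushing state-encoding markers at alphabet changes on the $u$-side and popping them at block completions on the $v^\rev$-side. The only cosmetic difference is that the paper's base automata accept by final state with explicit bottom markers $\bot_X$ rather than by empty stack, but the mechanics of saving and restoring states across syllable boundaries are the same, and you correctly identify the nondeterministic hand-off as the obstruction to a $\dcfwp$ version.
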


\begin{proof}
  Let $S$ and $T$ be $\cfwp$ semigroups. Let $A_S$ and $A_T$ be finite generating sets for $S$ and $T$, respectively,
  and for $X \in \set{S,T}$, let $\pda{P}_X$ be a pushdown automaton recognizing $\WP(X,A_X)$ accepting by final state,
  % with set of states $Q_X$,
  % initial state $q_X$, accept states $F_X$, stack alphabet $B_X$, and
  Assume that in $\pda{P}_X$, the initial stack content is only a stack bottom symbol
  $\bot_X$, which is never never popped or pushed.

  Construct a new pushdown automaton $\pda{Q}$ recognizing words over $A_S \cup A_T \cup \set{\#}$, functioning as
  follows. First, $\pda{Q}$ will recognize words in $(A_S\cup A_T)^+\#(A_S\cup A_T)^+$; since this is a regular
  language, assume without loss that the input is in this form. When $\pda{Q}$ begins, it reads a symbol from $A_X$ (for
  some $X \in \set{S,T}$). It pushes $\bot_X$ onto its stack and begins to simulate $\pda{P}_X$. Whenever it is
  simulating $\pda{P}_X$ and reads a symbol from $A_Y$, where $Y \neq X$, it pushes the current state of $\pda{P}_X$
  onto the stack, then pushes $\bot_Y$ onto the stack and begins to simulate $\pda{P}_Y$. These alternating simulations
  of $\pda{P}_S$ and $\pda{P}_T$ continue until the $\#$ is encountered.

  On reading the symbol $\#$, the automaton $\pda{Q}$ continues to simulate whichever $\pda{P}_X$ it was currently
  simulating. After this point, whenever it is simulating $\pda{P}_X$ (for some $X \in \set{S,T}$) and reads a symbol
  from $A_Y$, where $Y \neq X$, how it proceeds depends on whether the currently-simulated $\pda{P}_X$ is in an accept
  state:
  \begin{itemize}
  \item If it is in accept state, $\pda{Q}$ pops symbols from its stack until it encounters $\bot_X$, which it pops,
    then pops the state of $\pda{P}_Y$, restores the simulation of $\pda{P}_Y$ from this state (and with the stack
    contents down to the symbol $\bot_Y$), and simulates $\pda{P}_Y$ on reading $\#$ and then on reading the symbol just
    read by $\pda{Q}$. (If after popping $\bot_X$ the stack of $\pda{Q}$ is empty, it fails.)
  \item If it is not in an accept state, $\pda{Q}$ fails.
  \end{itemize}
  These alternating simulations of $\pda{P}_S$ and $\pda{P}_T$ continue until the end of the input unless $\pda{Q}$
  fails before then. At this point $\pda{Q}$ accepts if the currently-simulated $\pda{P}_X$ is in an accept state, and
  if the stack only contains symbols from the stack alphabet $B_X$ plus a single symbol $\bot_X$.

  It follows from the above description that $\pda{Q}$ recognizes strings of the form
  \begin{equation}
    \label{eq:freeprodstring}
    u_1u_2\cdots u_k\#v_k^\rev\cdots v_2^\rev v_1^\rev,
  \end{equation}
  where $u_i\#v_i^\rev \in L(\pda{P}_{X(i)})$ and either $X(2j) = S$ and $X(2j+1) = T$, or else $X(2j) = T$ and
  $X(2j+1) = S$. Thus $\pda{Q}$ recognizes strings \eqref{eq:freeprodstring} such that
  \[
    u_1u_2\cdots u_k =_{S\ast T} v_1v_2\cdots v_k,
  \]
  and the $u_i$ and $v_i$ are either both in $A_X^+$ or both in $A_Y^+$ for alternating $i$. Thus $\pda{Q}$ recognizes
  $\WP(S \ast T,A_X \cup A_Y)$.

  The free factors of a finitely generated free product are themselves finitely generated, so closure
  under free factors follows from Proposition~\ref{prop:changegensubsemigroups}.2.
\qed
\end{proof}

Notice that the strategy of the proof of \fullref{Theorem}{thm:freeprodsemigroups} cannot be applied to show that
the class of $\dcfwp$ semigroups is closed under taking free products. The problem is in the very
last step: after the automaton has read its last input symbols from some $A_X$, it cannot deterministically check that
the stack only contains symbols from the stack alphabet $B_X$ plus a single symbol $\bot_X$. Therefore the following question remains open:

\begin{question}
  Is the class of $\dcfwp$ semigroups closed under forming free products?
\end{question}

\begin{theorem}
  The class of $\cfwp$ monoids is closed under taking monoid free products and free factors.
\end{theorem}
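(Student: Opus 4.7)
The plan is to adapt the pushdown-automaton construction from \fullref{Theorem}{thm:freeprodsemigroups} (the semigroup case) to handle the additional complications of monoid free products. The key new phenomenon is that a maximal block in $u$ or $v^\rev$ may evaluate to the identity of its respective monoid and hence disappear in the reduced form of $M \ast N$, potentially merging two same-type live blocks that it separated. Consequently, the maximal-block factorizations of equal $u$ and $v$ need not have matching type patterns, so the new automaton $\pda{Q}'$ must nondeterministically guess which blocks are \emph{dead} (equal to the monoid identity) and which are \emph{live}, and verify that after removing dead blocks and merging adjacent same-type live blocks, both sides yield the same alternating sequence in $M \ast N$.

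Let $\pda{P}_M$ and $\pda{P}_N$ be PDAs for $\WP(M, A_M)$ and $\WP(N, A_N)$, accepting by final state, with distinct stack-bottom markers $\bot_M, \bot_N$ that are never pushed or popped. On input $u \# v^\rev$, the automaton $\pda{Q}'$ reads $u$ left-to-right, processing each maximal block $u_i$ by simulating $\pda{P}_{X(i)}$. At the end of each block, $\pda{Q}'$ nondeterministically chooses one of: (a) mark $u_i$ dead by supplying an internal $\#$ via $\epsilon$-moves, running $\pda{P}_{X(i)}$ to an accept state, and cleaning its working stack down to and including $\bot_{X(i)}$; (b) mark $u_i$ live by pushing the current $\pda{P}_{X(i)}$ state onto the stack; or (c) if the previously-saved live block was of the same type as $u_i$ and separated from $u_i$ only by already-verified dead blocks, pop the saved state and resume that $\pda{P}_{X(i)}$ simulation on $u_i$. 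After $\#$, processing of $v^\rev$ is mirrored: dead blocks are verified in the same way, while live blocks are matched against the saved states from $u$ (popped in order from the stack), completing the corresponding $\pda{P}_X$ simulations to acceptance.

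The main obstacle is the stack discipline for dead blocks: cleaning the stack space used by a dead block's simulation without disturbing the saved states of previous live blocks. This is arranged by using separate stack alphabets per $\pda{P}_X$ and by augmenting each $\pda{P}_X$ with an internal $\epsilon$-cleanup phase, triggered on reaching a final state, that empties its working stack down to $\bot_X$. The additional case where $u$ or $v$ represents the identity of $M \ast N$ (including the empty-word case) is handled by regular filters that route such inputs into specialized modes which verify that all maximal blocks of the non-empty side evaluate to the identity of their respective monoid, which is a context-free condition since $\WP(M,A_M)$ and $\WP(N,A_N)$ are both context-free.

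For the free factors part, suppose $M \ast N$ is $\cfwp$, so in particular it is finitely generated. The canonical retraction $\pi_M : M \ast N \to M$ (the identity on $M$, sending every element of $N$ to $1_M$) is a monoid homomorphism, and the image of any finite generating set of $M \ast N$ generates $M$. Hence $M$ is a finitely generated submonoid of $M \ast N$, and by \fullref{Proposition}{prop:changegensubsemigroups}(2), $M$ is $\cfwp$; symmetrically for $N$.
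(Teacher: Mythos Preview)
Your approach is essentially the same as the paper's sketch: both adapt the semigroup automaton $\pda{Q}$ by allowing it to nondeterministically ``skip'' subwords that represent the identity, with nesting so that identity segments inside identity segments are handled. The paper phrases this as ``non-deterministically read a string from either $K_X$ at any point (including while reading another string from $K_Y$ \ldots, so that such strings can be `nested')'', where $K_X$ is the context-free language of words over $A_X$ representing $1_X$; your dead/live classification with merging is an operationally more explicit version of the same mechanism. Your free-factor argument via the retraction is correct and matches the paper's appeal to \fullref{Proposition}{prop:changegensubsemigroups}(2).

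One presentational slip to fix: as written, your option~(c) is triggered ``at the end of each block'' and asks to ``resume that $\pda{P}_{X(i)}$ simulation on $u_i$'', but by then $u_i$ has already been consumed by a fresh simulation. The merge decision must be made at the \emph{start} of the block: when the incoming block $u_i$ has the same type as the saved state on top of the stack, nondeterministically either pop that state and feed $u_i$ into the resumed simulation, or push $\bot_{X(i)}$ and start fresh. The end-of-block choice is then only between (a)~dead (supply $\#$, check acceptance, clean down through $\bot_{X(i)}$) and (b)~live (push current state). With this reordering your construction handles the nested case correctly --- e.g.\ $m_1 n_1 m_2 n_2 m_3$ with $m_2 =_M 1$ and $n_1 n_2 =_N 1$ --- exactly as the paper's nested-$K_X$ description does.
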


\begin{proof}[Sketch proof]
  It is easy to see that the construction of the $\pda{Q}$ from the proof of
  \fullref{Theorem}{thm:freeprodsemigroups} can be adapted to the case of monoid free products. Using the notation
  from that proof, one observes that for $X \in \set{S,T}$ the language of words over $A_X$ representing the identity of
  $X$ is a context-free language $K_X$. Then one first modifies $\pda{Q}$ to accept $\#$ (that is, the empty word,
  followed by $\#$, followed by the empty word), then modifies $\pda{Q}$ so that it can
  non-deterministically read
  a string from either $K_X$ at any point (including while reading another string from $K_Y$ for $Y \in \set{S,T}$, so
  that such strings can be `nested').
\qed
\end{proof}

\section{Strong semilattices}
\label{sec:strongsemilattices}

We recall the definition of a strong semilattice of semigroups here, and refer the reader to
\cite[Sect.~4.1]{howie_fundamentals} for further background reading:

Let $Y$ be a semilattice. Recall that the meet of $\alpha,\beta \in Y$ is denoted $\alpha \land \beta$. For each $\alpha \in Y$, let
$S_\alpha$ be a semigroup. For $\alpha \geq \beta$, let $\phi_{\alpha,\beta} : S_\alpha \to S_\beta$ be a homomorphism
such that
\begin{enumerate}
\item For each $\alpha \in Y$, the homomorphism $\phi_{\alpha,\alpha}$ is the identity mapping.
\item For all $\alpha,\beta,\gamma \in Y$ with $\alpha \geq \beta \geq \gamma$,
$\phi_{\alpha,\beta}\phi_{\beta,\gamma} = \phi_{\alpha,\gamma}$.
\end{enumerate}
The \defterm{strong semilattice of semigroups} $S =
\mathcal{S}[Y;S_\alpha;\phi_{\alpha,\beta}]$ consists of the disjoint
union $\bigcup_{\alpha \in Y} S_\alpha$ with the following
multiplication: if $x \in S_\alpha$ and $y \in S_\beta$, then
\[
xy = (x\phi_{\alpha,\alpha \land \beta})(y\phi_{\beta,\alpha\land\beta}),
\]
where $\alpha \land \beta$ denotes the greatest lower bound of
$\alpha$ and $\beta$.

\begin{theorem}
  \label{thm:strongsemilattice}
Let $\mathfrak{C}$ be a class of languages closed under finite union,
inverse gsm-mappings and intersection with regular languages
(in particular, the class $\CF$).
A strong semilattice of semigroups is $U(\mathfrak{C})$
if and only if it is finitely generated and all the semigroups in its lattice
are $U(\mathfrak{C})$.
\end{theorem}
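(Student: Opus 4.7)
My plan is as follows.

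\medskip
\noindent\textbf{Necessity.} Suppose $S = \mathcal{S}[Y;S_\alpha;\phi_{\alpha,\beta}]$ lies in $U(\mathfrak{C})$. Then $S$ is finitely generated by definition, and the surjective semigroup homomorphism $S \to Y$ sending each $x \in S_\alpha$ to $\alpha$ shows $Y$ itself is a finitely generated semilattice, hence finite (every element of a finitely generated semilattice is a meet of some subset of generators). For each $\alpha \in Y$ I would show that $S_\alpha$ is a finitely generated subsemigroup of $S$: starting from a finite generating set $A$ for $S$ with $A_\gamma = A \cap S_\gamma$, the identity $a_1 \cdots a_n = (a_1\phi_{\gamma_1,\beta})\cdots(a_n\phi_{\gamma_n,\beta})$ (where $\beta = \gamma_1 \wedge \cdots \wedge \gamma_n$, proved by induction on $n$ using that each $\phi$ is a homomorphism and the transitivity law for the $\phi_{\alpha,\beta}$) shows that $\{a\phi_{\gamma,\alpha} : \gamma \geq \alpha,\ a \in A_\gamma\}$ is a finite generating set for $S_\alpha$. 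Since $\mathfrak{C}$ is closed under inverse gsm-mappings (hence inverse homomorphisms) and intersection with regular languages, \fullref{Proposition}{prop:changegensubsemigroups}(2) then gives $S_\alpha \in U(\mathfrak{C})$.

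\medskip
\noindent\textbf{Sufficiency.} Assume $Y$ is finite and each $S_\alpha \in U(\mathfrak{C})$. Pick a finite generating set $A_\alpha$ for each $S_\alpha$, taken pairwise disjoint, and set $A = \bigsqcup_\alpha A_\alpha$. Then $A$ generates $S$, every letter $a \in A$ has a well-defined \emph{level} $\ell(a) = \alpha$ such that $a \in A_\alpha$, and the identity displayed above shows that any nonempty word $u = a_1 \cdots a_n$ over $A$ represents the element $\prod_i a_i\phi_{\ell(a_i),\beta_u}$ of $S_{\beta_u}$, where $\beta_u = \ell(a_1) \wedge \cdots \wedge \ell(a_n)$. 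Hence
\[
\WP(S,A) = \bigsqcup_{\beta \in Y} L_\beta,
\]
where $L_\beta$ is the set of $u\#v^\rev$ with $\beta_u = \beta_v = \beta$ and the two corresponding products agreeing in $S_\beta$. Since $Y$ is finite and $\mathfrak{C}$ is closed under finite union, it suffices to show each $L_\beta \in \mathfrak{C}$.

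\medskip
\noindent\textbf{The gsm step.} For each $\beta$, fix for every $\gamma \geq \beta$ and every $a \in A_\gamma$ a word $w_{a,\beta} \in A_\beta^+$ representing $a\phi_{\gamma,\beta}$ in $S_\beta$. Let $g_\beta$ be the (deterministic) gsm with input alphabet $A \cup \{\#\}$ and output alphabet $A_\beta \cup \{\#\}$ that reads $a \in A_\gamma$ with $\gamma \geq \beta$ and outputs $w_{a,\beta}$, reads $\#$ and outputs $\#$, and has no transition on letters whose level is not $\geq \beta$. Then $g_\beta(u\#v^\rev)$ is defined precisely when $\beta_u,\beta_v \geq \beta$, and in that case it equals $u'\#v'^\rev$ where $u',v' \in A_\beta^+$ represent the images of $u,v$ in $S_\beta$; hence $g_\beta(u\#v^\rev) \in \WP(S_\beta,A_\beta)$ iff those images coincide. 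Since $Y$ is finite, the set $R_\beta$ of words $u\#v^\rev \in A^+\#A^+$ with $\beta_u = \beta_v = \beta$ is regular (a finite-state machine simply tracks the running meet of levels on either side of $\#$). Therefore
\[
L_\beta = g_\beta^{-1}\!\bigl(\WP(S_\beta,A_\beta)\bigr) \cap R_\beta,
\]
which lies in $\mathfrak{C}$ by the assumed closure properties.

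\medskip
\noindent\textbf{Expected obstacle.} The only genuinely technical point is the gsm construction together with the verification that projecting via the $\phi_{\gamma,\beta}$ is compatible with multiplication in $S$; once that is in hand, the argument is a routine assembly from closure under finite union, inverse gsm-mappings, and intersection with regular languages.
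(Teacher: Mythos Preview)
Your argument follows the same architecture as the paper's proof: decompose $\WP(S,A)$ as a finite union of pieces $L_\beta$ indexed by $Y$, recognise the ``level'' condition $\beta_u=\beta_v=\beta$ by a regular language, and pull back $\WP(S_\beta,A_\beta)$ through a gsm that rewrites each generator $a$ as a chosen word for $a\phi_{\ell(a),\beta}$. The necessity direction is also the same, with more detail than the paper supplies.

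There is one slip in your gsm step. As you describe $g_\beta$, it outputs $w_{a,\beta}$ for every input letter $a$, irrespective of position. But then for $v=b_1\cdots b_m$ you get
\[
g_\beta(v^{\rev}) \;=\; w_{b_m,\beta}\,w_{b_{m-1},\beta}\cdots w_{b_1,\beta},
\]
whereas the word you need on the right of $\#$ is
\[
(v')^{\rev} \;=\; (w_{b_1,\beta}\cdots w_{b_m,\beta})^{\rev} \;=\; (w_{b_m,\beta})^{\rev}\cdots(w_{b_1,\beta})^{\rev}.
\]
These agree only when every $w_{a,\beta}$ has length~$1$. The fix is exactly what the paper does: let the gsm change state on reading $\#$, and thereafter output $(w_{a,\beta})^{\rev}$ rather than $w_{a,\beta}$. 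With that correction your identity $L_\beta = g_\beta^{-1}\bigl(\WP(S_\beta,A_\beta)\bigr)\cap R_\beta$ holds and the proof goes through.
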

\begin{proof}
Let $S = \mathcal{S}[Y;S_\alpha;\phi_{\alpha,\beta}]$ be a strong semilattice of semigroups.
If $S$ is $U(\mathfrak{C})$, then it must be finitely generated (that is, $Y$ must be finite and
each $S_\alpha$ finitely generated).  Moreover, the $S_\alpha$ must all be
$U(\mathfrak{C})$, since they are finitely generated subsemigroups of $S$.

Conversely, suppose that $Y$ is finite and each $S_\alpha$ is $U(\mathfrak{C})$.
For each $\alpha\in Y$, let $A_\alpha$ be a finite generating set for $S_\alpha$
and $A'_\alpha = \bigcup_{\beta\geq \alpha} A_\alpha$.
Let $A = \bigcup_{\alpha\in Y} A_\alpha$.
Define homomorphisms $\phi_\alpha: (A'_\alpha)^*\ra A_\alpha^*$
by $x\mapsto x\phi_{\beta,\alpha}$ for $x\in A_\beta$.

We can view $\WP(S,A)$ as the union of its restrictions to each $S_\alpha$:
that is, as the union of the languages
$L_\alpha = \gset{u\#v^\rev\in \WP(S,A)}{u,v\in S_\alpha}$.
In turn, each $L_\alpha$ can be expressed as $L'_\alpha\cap R_\alpha$, where
$L'_\alpha = \gset{u\#v^\rev}{u,v\in A^*, u\phi_\alpha =_{S_\alpha} v\phi_\alpha}$
and $R_\alpha = \gset{u\#v^\rev}{u,v\in A^*, u, v\in S_\alpha}$.
Note that $u\#v^\rev\in L'_\alpha$ implies $u,v\in S_\beta$ for some
$\beta\geq \alpha$, since otherwise $\phi_\alpha$ is not defined.
We have $L'_\alpha, R_\alpha\subseteq (A'_\alpha)^*$ for all $\alpha\in Y$.

%Fix $\alpha\in Y$.  We show that $R_\alpha$ is regular and $L'_\alpha$
%is context-free, thus establishing that $L_\alpha$ is context-free.

Defining $R'_\alpha = \gset{w\in (A'_\alpha)^*}{w\in S_\alpha}$,
we have $R_\alpha = R'_\alpha\#R'_\alpha$ (since membership of $w$
in $S_\alpha$ depends only on the content of $w$).
The language $R'_\alpha$ is recognised by a finite automaton consisting
of the semilattice $Y$ with an adjoined top element $\top$ as the start state,
and final state $\alpha$.  The transition function is given by the meet operation:
$(\top,x)\mapsto \gamma$ and $(\beta,x)\mapsto \beta\land\gamma$
for $x\in A_\gamma$.  A word $w$ is accepted by this automaton if and only
if the meet of all $\gamma$ such that $w$ contains a symbol
in $A_\gamma$ is $\alpha$.  Thus $R'_\alpha$ is regular,
and hence so is $R_\alpha$, as a concatenation of regular languages.

Now choose a homomorphism
$\psi_\alpha: (A'_\alpha)^*\ra A_\alpha^*$ defined by $x\mapsto w_x$
such that $w_x =_S x\phi_\alpha$.
Let $W = \gset{w_x}{w\in A_\alpha^*}$ and $M = \WP(S_\alpha,A_\alpha)\cap W^*$.
Then $M\in \mathfrak{C}$, and $L'_\alpha$ is the inverse image of $M$ under
the gsm-mapping from $(A'_\alpha)^*\# (A'_\alpha)^*$ to $(A_\alpha)^*\#(A_\alpha)^*$
that preserves $\#$ and maps all symbols in $x$
before the $\#$ to $x\psi_\alpha$ and all symbols $x$ after the $\#$
to $(x\psi_\alpha)^\rev$.  Since $\mathfrak{C}$ is closed
under inverse gsm-mappings, $L'_\alpha$ is thus in $\mathfrak{C}$.
In turn, $L_\alpha$ is in $\mathfrak{C}$,
hence so is $\WP(S,A)$, as the union of the finitely many $L_\alpha$.
\qed
\end{proof}

The class $\DCF$ is not closed under finite union \cite[Theorem~10.5(b)]{hopcroft_automata}.
We conjecture that a finitely generated strong semilattice of $\dcfwp$
semigroups need not be $\dcfwp$. Let $Y = \set{\alpha,\beta}$ be a two-element semilattice with $\alpha > \beta$. Let $S_\alpha$ be the free
group generated by $\set{x,y}$ and let $S_\beta$ be $\zset$ (under $+$). Define $\phi_{\alpha,\beta}$ to be the
homomorphism extending $x \mapsto 1$, $y \mapsto -1$. Both $S_\alpha$ and $S_\beta$ are virtually free groups and so
$\dcfwp$, but the word problem of $\mathcal{S}\bracks[\big]{Y;\set{S_\alpha,S_\beta};\phi_{\alpha,\beta}}$ does not
appear to be deterministic context-free, for checking equality in $S_\alpha$ seems to require computing reduced words on
the stack, while checking equality in $\zset$ seems to require using the stack as a counter, and there is no way to know
in advance which is required.

\section{Rees matrix semigroups}
\label{sec:reesmatrix}

Let us recall the definition of a Rees matrix semigroup. Let $S$ be a semigroup, let $I$ and $\Lambda$ be abstract index
sets, and let $P\in \mathrm{Mat}_{\Lambda\times I}(S)$ (that is, $P$ is a $\Lambda \times I$ matrix with entries from
$S$). Denote the $(\lambda,i)$-th entry of $P$ by $p_{\lambda i}$. The \defterm{Rees matrix semigroup} over $S$ with
sandwich matrix $P$, denoted $\mathcal{M}[S;I,\Lambda;P]$, is the set $I \times S \times \Lambda$ with multiplication
defined by
\[
(i,x,\lambda)(j,y,\mu) = (i,xp_{\lambda j}y,\mu).
\]
This construction is important because it arises in the classification of completely simple semigroups as
Rees matrix semigroups over groups; see \cite[Sect.~3.2--3.3]{howie_fundamentals}.

Hoffmann et al.\ showed that a completely simple semigroup is $\cfwp$ if and only if it is isomorphic to a Rees matrix
semigroup over a finitely generated virtually free group \cite[Theorem~2]{hoffmann_contextfree}; their proof depends on
virtually free groups having \emph{deterministic} context-free word problem. The following theorem generalizes Hoffmann
et al.'s characterization to Rees matrix semigroups over arbitrary semigroups.
See \cite{ginsburg_deterministic,hopcroft_automata} for background on inverse gsm-mappings.

\begin{theorem}
Let $\mathfrak{C}$ be a class of languages closed under inverse gsm-mappings and
intersection with regular languages (in particular, $\CF$ or $\DCF$).
Then a finitely generated Rees matrix semigroup
over a semigroup $S$ is $U(\mathfrak{C})$ if and only if $S$ is $U(\mathfrak{C})$.
\end{theorem}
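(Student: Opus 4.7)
The plan is to prove both implications of the biconditional separately. For the direction $S \in U(\mathfrak{C}) \Rightarrow M \in U(\mathfrak{C})$, I will adapt the gsm strategy used in \fullref{Theorem}{thm:strongsemilattice}. Fix a finite generating set $A = \set{a_k = (i_k,s_k,\lambda_k) : 1 \leq k \leq n}$ for $M$. Since $M$ is finitely generated, $I$ and $\Lambda$ must be finite, and $B = \set{s_k} \cup \set{p_{\lambda_k i_l} : k,l}$ is then a finite generating set for $S$. For each $b \in B$ fix a representative $\bar{b} \in B^*$ with $\bar{b} =_S b$. I construct a gsm $\delta \colon (A \cup \set{\#})^* \to (B \cup \set{\#})^*$ that, on reading $u \# v^\rev$ with $u = a_{k_1}\cdots a_{k_n} \in A^+$, outputs $\bar{s_{k_1}} \bar{p_{\lambda_{k_1} i_{k_2}}} \bar{s_{k_2}} \cdots \bar{s_{k_n}}$: its state remembers the previously-seen $\lambda$-coordinate so that the correct sandwich entry is interposed between consecutive $\bar{s}$-blocks, and the reversed half after $\#$ is handled symmetrically (with the previously-seen $i$-coordinate). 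Then $\WP(M,A) = \delta^{-1}(\WP(S,B)) \cap R$, where $R$ is the regular language forcing the first $i$-coordinates and the last $\lambda$-coordinates of $u$ and $v$ to agree; the closure hypotheses on $\mathfrak{C}$ complete this direction.

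For the converse $M \in U(\mathfrak{C}) \Rightarrow S \in U(\mathfrak{C})$, extracting $B$ as above shows $S$ is finitely generated. The goal is now to construct a gsm $\sigma \colon (B \cup \set{\#})^* \to (A \cup \set{\#})^*$ together with a regular language $R'$ such that $\WP(S,B) = \sigma^{-1}(\WP(M,A)) \cap R'$. Fix $i_0 \in I$, $\lambda_0 \in \Lambda$, and let $e = p_{\lambda_0 i_0}$. The naive attempt of mapping each $b \in B$ to some $A$-word representing $(i_0,b,\lambda_0) \in M$ fails: concatenated lifts multiply in $M$ to $(i_0, b_1 e b_2 e \cdots e b_k, \lambda_0)$, so their preimage detects equality in the variant semigroup $S^{(e)}$ (with product $s \cdot_e t = set$) rather than in $S$. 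To remove this discrepancy, I would use \fullref{Proposition}{prop:changegensubsemigroups} to replace $A$ by an enlarged finite generating set including carefully chosen ``boundary'' generators, and build each $\sigma(b)$ as a longer $A$-word so that, when successive lifts are concatenated, the spurious copies of $e$ appearing between consecutive blocks are absorbed into fixed ``shell'' factors at the two endpoints that are independent of $u$ and therefore cancel when $\sigma(u) =_M \sigma(v)$ is checked. The language $R'$ restricts attention to inputs whose lift has this well-formed shell structure.

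The hard part is this converse direction, and specifically the boundary-absorption construction just sketched. Because $\mathfrak{C}$ is not assumed closed under forward gsm-mappings or homomorphic images, the spurious $e$-factors cannot be projected out \emph{ex post}; the cancellation must be engineered into $\sigma$ itself, with no appeal to cancellativity or to an identity in $S$. The delicate step is therefore to verify that this construction can be carried out uniformly for an arbitrary sandwich matrix $P$ over an arbitrary semigroup $S$, and that the resulting $\sigma^{-1}(\WP(M,A)) \cap R'$ is then exactly $\WP(S,B)$ rather than a proper sub- or super-language.
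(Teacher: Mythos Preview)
Your treatment of the direction $S \Rightarrow M$ is correct and essentially matches the paper's: both build a gsm that inserts representatives of the sandwich entries between consecutive generators, with the index-matching condition handled by you via a separate regular intersection $R$ and by the paper via having the gsm append a spurious $\#$ on mismatch (so that the image falls outside the fixed regular set $(AW)^*A\#A(WA)^*$).

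For $M \Rightarrow S$, however, your sketch has a genuine gap. If the ``boundary-absorption'' construction worked, $\sigma(b_1\cdots b_k)$ would represent $(i_0,\, L\,b_1\cdots b_k\,R,\, \lambda_0)$ for fixed shells $L,R \in S$; but then $\sigma(u) =_M \sigma(v)$ says only that $L u R =_S L v R$, which without cancellativity (and you explicitly disclaim any such appeal) does not yield $u =_S v$, so $\sigma^{-1}(\WP(M,A))\cap R'$ would in general \emph{properly} contain $\WP(S,B)$. More fundamentally, any gsm whose output represents elements of $\{i_0\}\times S\times\{\lambda_0\}$ forces an interposed $e = p_{\lambda_0 i_0}$ between consecutive blocks, and an arbitrary $S$ has no mechanism to neutralise these. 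You should know that the paper dispatches this direction in a single clause (``hence $S$ is also finitely generated and thus $U(\mathfrak{C})$''), apparently treating $S$ as a finitely generated subsemigroup of $M$ so that \fullref{Proposition}{prop:changegensubsemigroups} applies; but the natural candidate $\{i_0\}\times S\times\{\lambda_0\}$ is isomorphic to the variant $S^{(e)}$ rather than to $S$ itself---precisely the obstruction you diagnosed. So the difficulty you are wrestling with is real, and neither your outline nor the paper's one-liner resolves it for arbitrary $S$.
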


\begin{proof}
Let $M = \mathcal{M}[S;I,\Lambda;P]$ be a Rees matrix semigroup and let
$\mathfrak{C}$ be as in the statement of the theorem.
If $M$ is $U(\mathfrak{C})$, then it must be finitely generated, hence $S$ is also finitely generated
and thus $U(\mathfrak{C})$.

Conversely, suppose that $S$ is $U(\mathfrak{C})$ and $M$ is finitely generated by
$B\subseteq I\times S\times \Lambda$, and let $A$ be the projection of $B$
onto $S$.  For each $i\in I$ and $\lambda\in \Lambda$, choose a word
$w_{\lambda i}\in A^*$ representing $p_{\lambda i}$.
Let $W$ be the (finite) set of all the $w_{\lambda i}$.
Let $L = \WP(S,A)\cap (AW)^*A\# A(WA)^*$, which is in $\mathfrak{C}$,
as the intersection of a language in $\mathfrak{C}$ with a regular language.
We will define a gsm-mapping $\Phi$ such that $\WP(M,B)$ is the
inverse image of $L$ under $\Phi$.

First, define a gsm-mapping $\phi: B^*\ra A^*$ by
\[(i_1,x_1,\lambda_1)\ldots(i_m,x_m,\lambda_m)\mapsto
x_1 w_{\lambda_1 i_2} x_2\ldots w_{\lambda_{m-1} i_m} x_m.\]
Then for $w = (i_1,x_1,\lambda_1)\ldots(i_m,x_m,\lambda_m)$
we have $w =_M (i(w), w\phi, \lambda(w))$, where $i(w):=i_1$
and $\lambda(w):= \lambda_m$.

Now extend $\phi$ to a gsm-mapping $\Phi: (B\cup \{\#\})^*\ra (A\cup \{\#\})^*$
as follows:
For $u,v\in B^*$ and $w\in (B\cup \{\#\})^*$,
let $(u\# v^\rev)\Phi = u\phi \# (v\phi)^\rev c$, where
$c = \emptyword$ if $i(u) = i(v)$ and $\lambda(u) = \lambda(v)$,
and $c = \#$ otherwise.  (Since $I$ and $\Lambda$ are finite, the computation of
$c$ can be done by storing $i(u)$ and $\lambda(u)$ in the state and then checking
against $\lambda(v)$ and $i(v)$.)
Let $(u\# v^\rev \# w) \Phi = u\phi \# (v\phi)^\rev \#$
(achieved by storing in the state whether $\#$ has already been seen).

The preimage of $L$ in $(B\cup \{\#\})^*$ under $\Phi$ consists of all
words of the form $u\# v^\rev$ with $u,v\in B^*$ such that
$i(u) = i(v)$, $\lambda(u) = \lambda(v)$ and $u\phi \# (v\phi)^\rev\in \WP(S,A)$.
But this is exactly all $u\#v^\rev$ such that $u=_M v$, so
$L\Phi^{-1} = \WP(M, B)$.  Hence $M$ is $U(\mathfrak{C})$, since its word problem is
obtained from a language in $\mathfrak{C}$ by an inverse gsm-mapping.
\qed
\end{proof}

%In fact, the previous proof can be applied to the class of semigroups with word problem in any class of languages that is
%closed under inverse gsm-mappings and intersection with regular languages.

The fact that every completely regular semigroup is isomorphic to a semilattice (not necessarily strong) of completely
simple semigroups \cite[Theorem~
4.1.3]{howie_fundamentals} raises the following question:

\begin{question}
  Which completely regular semigroups are $\cfwp$?
\end{question}

\section{Bruck--Reilly extensions}
\label{sec:bruckreilly}

Let $M$ be a monoid with presentation $\pres{A}{\drel{R}}$
and $\phi : M \to M$ an endomorphism.
The \emph{Bruck--Reilly extension} $\BR(M,\phi)$ of $M$ by $\phi$ is the monoid
with presentation $\pres{ A, b, c }{ \drel{R}, bc=1, ba = (a\phi)b,
ac = c(a\phi)\;\;(a\in A)}$.
This is an analogue for monoids of the notion of HNN-extensions for groups.

If $\phi$ is the identity endomorphism, then $\BR(M,\phi)$ is isomorphic to the
direct product of $M$ with the bicyclic monoid generated by $\set{b,c}$.
Thus by \fullref{Lemma}{dirprodmon} the class of $\cfwp$ semigroups is not
closed under Bruck--Reilly extensions.
In Lemmata~\ref{lem:bruckreillydown} to~\ref{lem:bruckreillyup}
below, we establish a necessary and
sufficient condition for $\BR(M,\phi)$ to be $\cfwp$.

\begin{lemma}
  \label{lem:bruckreillydown}
  Let $M$ be a monoid and $\phi : M \to M$ an endomorphism. If $\BR(M,\phi)$ is $\cfwp$, then $M$ is $\cfwp$.
\end{lemma}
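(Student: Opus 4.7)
The plan is to identify $M$ as a finitely generated submonoid of $\BR(M,\phi)$ and apply Proposition~\ref{prop:changegensubsemigroups}.2 with $\mathfrak{C} = \CF$. The essential ingredient is the standard normal-form theorem for Bruck--Reilly extensions: every element of $\BR(M,\phi)$ admits a unique expression $c^i m b^j$ with $i, j \in \nset\cup\set{0}$ and $m \in M$. This exhibits $M$ as the submonoid of elements with $i = j = 0$, and in particular shows that for words $u, v$ over any generating set $A$ of $M$ one has $u =_{\BR(M,\phi)} v$ if and only if $u =_M v$.

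With this embedding in place, the argument proceeds as follows. Take a finite generating set $A$ for $M$ (supplied by its presentation), so that $A \cup \set{b,c}$ is a finite generating set for $\BR(M,\phi)$. By Proposition~\ref{prop:changegensubsemigroups}.1, $\WP(\BR(M,\phi), A\cup\set{b,c})$ is context-free. Intersecting with the regular language $A^* \# A^*$ yields precisely the words $u\#v^\rev$ with $u, v \in A^*$ satisfying $u =_{\BR(M,\phi)} v$, which by the embedding equals $\WP(M,A)$. Since $\CF$ is closed under intersection with regular languages, $\WP(M,A)$ is context-free, and therefore $M$ is $\cfwp$.

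The main obstacle is justifying the normal-form theorem from the presentation of $\BR(M,\phi)$ alone. I would handle this by constructing the standard alternative model $(\nset\cup\set{0}) \times M \times (\nset\cup\set{0})$ with its explicit multiplication, verifying that it satisfies the defining relations of $\BR(M,\phi)$, and observing that every word on the generators $A \cup \set{b,c}$ can be reduced to the form $c^i m b^j$ using the rewrites $bc \to \emptyword$, $ba \to (a\phi)b$ and $ac \to c(a\phi)$. Together these give the isomorphism $\BR(M,\phi) \cong (\nset\cup\set{0}) \times M \times (\nset\cup\set{0})$, in which $M$ sits naturally as the factor $\set{(0,m,0) : m \in M}$. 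This verification is standard (see \cite{howie_fundamentals}) and should not present a serious difficulty; once it is in hand, the rest of the argument reduces to applying closure properties of $\CF$.
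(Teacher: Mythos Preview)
Your argument has a genuine gap: you assume that $M$ is finitely generated, but this is not part of the hypothesis. The lemma only assumes that $\BR(M,\phi)$ is $\cfwp$ (hence finitely generated), and finite generation of $\BR(M,\phi)$ does \emph{not} by itself force $M$ to be finitely generated. For a concrete example, take $M$ to be the free monoid on $\{x_1,x_2,\ldots\}$ and $\phi$ the shift endomorphism $x_i \mapsto x_{i+1}$; then $b x_i c = x_{i+1}$ in $\BR(M,\phi)$, so $\BR(M,\phi)$ is generated by $\{x_1,b,c\}$, while $M$ is not finitely generated. Your appeal to ``a finite generating set $A$ for $M$ (supplied by its presentation)'' is unjustified: the presentation $\pres{A}{\drel{R}}$ in the definition of the Bruck--Reilly extension is not assumed to be finite. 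Consequently you cannot invoke \fullref{Proposition}{prop:changegensubsemigroups}(2), which applies only to finitely generated submonoids.

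The paper's proof confronts exactly this difficulty. Starting from a finite generating set $Z$ for $\BR(M,\phi)$ with $Z' = Z \cap M$, one observes that $M$ is generated by $Y = \{z\phi^n : z \in Z',\ n \geq 0\}$. If every $z \in Z'$ has finite orbit under $\phi$, then $Y$ is finite and your intended argument goes through. The substantial work is ruling out an infinite orbit: if some $z$ has all $z\phi^n$ distinct, the paper intersects the word problem with $c^*b^*zc^*b^*\#b^*c^*zb^*c^*$, applies a gsm-mapping, and derives a language $\{b^\beta z c^\delta \# c^{\delta'} z b^{\beta'} : \max\{\beta,\delta\} = \max\{\beta',\delta'\}\}$ that fails the pumping lemma. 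This contradiction is what secures finite generation of $M$, and it is precisely the piece your proposal is missing.
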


\begin{proof}
  Let $Z$ be a generating set for $\BR(M,\phi)$, and let $Z' = Z \cap M$. Then $M$ is generated by
  \[
    Y = \gset{z\phi^n}{z \in Z', n \in \nset\cup \set{0}}.
  \]
  If every $z \in Z'$ has a finite orbit under $\phi$, then $Y$ is finite and so $M$ is a finitely-generated submonoid
  of $\BR(M,\phi)$ and so $\cfwp$. So suppose, with the aim of obtaining a contradiction, that $z \in Z'$ has infinite
  orbit under $\phi$: that is, that all the elements $z\phi^n$ (where $n \in \nset$) are distinct.

  \begin{lemma}
    Let $\beta,\beta',\delta,\delta' \in \nset \cup \set{0}$. Then there exist
    $\alpha,\alpha',\gamma,\gamma' \in \nset \cup \set{0}$ such that
    $c^\gamma b^\beta zc^\delta b^\alpha =_{\BR(M,\phi)} c^{\gamma'}b^{\beta'}zc^{\delta'}b^{\alpha'}$ if and only if
    $\max\set{\beta,\delta} = \max\set{\beta',\delta'}$.
  \end{lemma}

  \begin{proof}
    The normal form of $c^\gamma b^\beta zc^\delta b^\alpha$ is
    $c^{\gamma-\beta + \max\set{\beta,\delta}}z\phi^{\max\set{\beta,\delta}}b^{\alpha-\delta+\max\set{\beta,\delta}}$;
    the same holds replacing $\gamma,\beta,\alpha,\delta$ by their dashed versions.

    First, suppose that there exist $\alpha,\alpha',\gamma,\gamma'$ such that
    $c^\gamma b^\beta zc^\delta b^\alpha =_{\BR(M,\phi)} c^{\gamma'}b^{\beta'}zc^{\delta'}b^{\alpha'}$. By the previous
    paragraph, since all $z\phi^n$ are distinct, it follows that $\max\set{\beta,\delta} = \max\set{\beta',\delta'}$.

    Now suppose that $\max\set{\beta,\delta} = \max\set{\beta',\delta'}$. Choose $\alpha$ and $\gamma$ such that
    $\gamma = \beta-\max\set{\beta,\delta}$ and $\alpha = \delta - \max\set{\beta,\delta}$, and similarly for dashed
    versions. Then, by the first paragraph,
    $c^\gamma b^\beta zc^\delta b^\alpha =_{\BR(M,\phi)} c^{\gamma'}b^{\beta'}zc^{\delta'}b^{\alpha'}$.
  \qed
\end{proof}

  Let $L$ be the word problem of $\BR(M,\phi)$ with respect to $Z \cup \set{b,c}$; the language $L$ is context-free. Let
  \[
    K = L \cap c^*b^*zc^*b^*\#b^*c^*zb^*c^*.
  \]
  Then $K$ is context-free. Let $J$ be the language obtained from $K$ by applying the gsm-mapping that deletes any
  initial sequence of symbols $c$, any terminal sequence of symbols $c$, any sequence of symbols $b$ immediately before
  $\#$, and any sequence of symbols $b$ immediately after $\#$. Thus $J$ consists of words of the form
  $b^\beta zc^\delta\#c^{\delta'}zb^{\beta'}$ for which there exist
  $\alpha,\alpha',\gamma,\gamma' \in \nset \cup \set{0}$ such that
  $c^\gamma b^\beta zc^\delta b^\alpha\#b^{\alpha'}c^{\delta'}zb^{\beta'}c^{\gamma'} \in K$. Since $K$ is a subset of
  the word problem for $\BR(M,\phi)$, it follows from the lemma that $J$ is the language
  \[
    \gset[\big]{b^\beta zc^\delta\#c^{\delta'}zb^{\beta'}}{\max\set{\beta,\delta} = \max\set{\beta',\delta'}}.
  \]
  A straightforward argument using the pumping lemma shows that $J$ cannot be context-free. However, the class of
  context-free languages is closed under gsm-mappings, and $J$ was obtained from the context-free language $K$ via a
  gsm-mapping. This is a contradiction, which completes the proof.  \qed
\end{proof}

\begin{lemma}
\label{lem:bruckreillyimphi}
  If $\BR(M,\phi)$ is $\cfwp$, then $\im\phi^n$ is finite for some $n$.
\end{lemma}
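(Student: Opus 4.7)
Assume for contradiction that $\BR(M,\phi)$ is $\cfwp$ and that $\im\phi^n$ is infinite for every $n \geq 0$. By \fullref{Lemma}{lem:bruckreillydown}, $M$ is then $\cfwp$ and in particular finitely generated by a finite set $B$. The plan is to split on whether some element of $M$ has infinite $\phi$-orbit and derive a contradiction in each case.

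\emph{Case 1.} Some word $w \in B^*$ has pairwise distinct iterates $w\phi^n$ ($n \geq 0$) in $M$. The proof of \fullref{Lemma}{lem:bruckreillydown} then carries through essentially verbatim with this $w$ in place of the generator $z$: the normal-form computation for $c^\gamma b^\beta w c^\delta b^\alpha$ is unchanged, the inner lemma uses only distinctness of the iterates, and intersecting $\WP(\BR(M,\phi), B \cup \set{b,c})$ with $c^*b^*wc^*b^*\#b^*c^*w^\rev b^*c^*$ and then applying the gsm-mapping that deletes leading and trailing $c$-blocks together with the $b$-blocks adjacent to $\#$ yields
\[
J = \gset[\big]{b^\beta w c^\delta \# c^{\delta'} w^\rev b^{\beta'}}{\max\set{\beta,\delta} = \max\set{\beta',\delta'}}.
\]
A pumping argument shows $J$ is not context-free, contradicting the hypothesis.

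\emph{Case 2.} Every word in $B^*$ has finite $\phi$-orbit; since $B$ is finite, this is equivalent to each generator having finite $\phi$-orbit. I would then choose $N \geq 0$ beyond every pre-period and $P \geq 1$ a common multiple of every period, so that $x\phi^{N+P} = x\phi^N$ for all $x \in B$. Set $M' = \im\phi^N$, the submonoid of $M$ generated by the finite set $B\phi^N$; by hypothesis $M'$ is infinite. Since $\phi^P$ is a monoid endomorphism fixing each generator of $M'$, the restriction $\phi^P|_{M'}$ is the identity. The plan is now to exhibit inside $\BR(M,\phi)$ a finitely generated submonoid isomorphic to a direct product of two infinite monoids. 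Let $S'$ denote the submonoid $\langle B\phi^N \cup \set{b^P,c^P}\rangle$ of $\BR(M,\phi)$. Using $\phi^P|_{M'} = \id$ one obtains $b^P m = (m\phi^P)b^P = m b^P$ and $m c^P = c^P m$ for every $m \in M'$, and $b^P c^P = 1$ already holds in $\BR(M,\phi)$, so every element of $S'$ can be written in the form $c^{iP} m b^{jP}$ with $m \in M'$ and $i,j \geq 0$. A routine comparison of $\BR(M,\phi)$-normal forms then confirms that these representatives are unique and that their multiplication coincides with direct-product multiplication, yielding an isomorphism $S' \cong M' \times B'$, where $B' = \langle b^P, c^P \mid b^P c^P = 1 \rangle$ is bicyclic. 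Since $S'$ is a finitely generated submonoid of the $\cfwp$ monoid $\BR(M,\phi)$, it is $\cfwp$ by \fullref{Proposition}{prop:changegensubsemigroups}; but $M'$ and $B'$ are both infinite monoids, so $M' \times B'$ cannot be $\cfwp$ by \fullref{Lemma}{dirprodmon}. This contradiction completes the proof.

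The main obstacle is Case 2: the infinite image here arises only through combinatorial growth rather than through any single element's orbit, so the language-theoretic technique of \fullref{Lemma}{lem:bruckreillydown} does not apply. The real work is the stabilisation argument giving $\phi^P|_{M'} = \id$ and the accompanying normal-form verification that recognises $S'$ as a genuine direct product $M' \times B'$ inside $\BR(M,\phi)$.
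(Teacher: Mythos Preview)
Your argument is correct but follows a genuinely different route from the paper's. The paper gives a single uniform pumping argument: it intersects the word problem with the regular language $X^*b^*\#X^*b^*$, takes $n$ larger than the pumping constant for the resulting language $K$, and then for an arbitrary $u \in \im\phi^n$ (say $u = v\phi^n$) pumps the word $ub^n\#v^\rev b^n \in K$. The constraint that the number of symbols $b$ on each side of $\#$ must match forces any pumped block to lie entirely inside $u$ or entirely inside $v^\rev$, so one can iteratively shorten either $u$ or $v$ without changing the element $v\phi^n$; this yields $\im\phi^n \subseteq (X^{\leq n})\phi^n$, which is finite.

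Your proof instead splits on whether some element has infinite $\phi$-orbit. Case~1 recycles the contradiction from \fullref{Lemma}{lem:bruckreillydown} (correctly observing that the inner lemma there needs only distinctness of the iterates, not that the element be a single generator). Case~2 is the new content: once every generator has finite orbit you stabilise to obtain $\phi^P|_{M'} = \id$ on $M' = \im\phi^N$, and then recognise $\langle B\phi^N \cup \{b^P, c^P\}\rangle$ as an isomorphic copy of $M' \times (\text{bicyclic})$ inside $\BR(M,\phi)$, contradicting \fullref{Lemma}{dirprodmon}. The normal-form check you sketch is straightforward since $c^{iP} m b^{jP}$ is already in Bruck--Reilly normal form.

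Both approaches are valid. The paper's has the advantage of being uniform and giving an explicit value of $n$ (the pumping constant), with no case analysis. Yours is more structural: it reduces everything to results already established (\fullref{Lemma}{lem:bruckreillydown} and \fullref{Lemma}{dirprodmon}), and the Case~2 observation that a power of $\phi$ acts trivially on a cofinal image is a nice piece of insight in its own right. One small expository point: the phrase ``since $B$ is finite, this is equivalent to each generator having finite $\phi$-orbit'' slightly misplaces the role of finiteness of $B$ --- the equivalence holds because orbits of products are bounded by orbits of factors; finiteness of $B$ is what lets you choose a single $N$ and $P$ that work uniformly.
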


\begin{proof}
  Suppose $S = \BR(M,\phi)$ is $\cfwp$. As a corollary of the preceding result, $M$ admits a finite generating set $X$,
  and so $S$ is generated by $Y = X \cup \set{b,c}$. Let $L$ be the word problem of $S$ with respect to $Y$. Let
  $K = X^*b^*\#X^*b^* \cap L$; then $K$ is also context-free. Let $n$ be greater than the pumping constant for $K$.

  Let $u \in X^* \cap \im\phi^n$. Then there exists $v \in X^*$ such that $v\phi^n = u$.  Therefore
  $b^nv =_S (v\phi^n)b^n =_S ub^n$. Therefore $ub^n\#v^\rev b^n \in K$.

  Suppose that $|v| > n$.  Applying the pumping lemma shows that $ub^n\#v^\rev b^n$ factors as $pqrst$ with $|r| < n$
  such that $pq^irs^it$ also lies in $K$ for any $i$. By the definition of $K$, the factors $q$ and $s$ must lie wholly
  within the words $u$, $b^n$, $v^\rev$, or $b^n$. By the bound on its length, the factor $r$ cannot involve the whole
  of the left-hand $b^n$ or the whole of $v^\rev$. Further, it is impossible that $q$ and $s$ are both non-empty and lie
  in $b^n$ and $v^\rev$, respectively, for taking $i=2$ gives a word with different numbers of symbols $b$ before and
  after the $\#$. Thus either $q$ lies in $u$ and $s$ is empty, or else $q$ is empty and $s$ lies in $v^\rev$.

  In the former case, taking $i = 0$ yields a word in $K$ of the form $\hat{u}b^n\#v^\rev b^n$, where $|\hat{u}| <
  |u|$. Thus $\hat{u}b^n =_S b^nv =_S (v\phi^n)b^n$, and so $\hat{u} = v\phi^n = u$. Thus we can replace $u$ by a
  strictly shorter word $\hat{u}$ that is equal to it in $S$ (and so in $M$) and repeat the above reasoning.

  In the latter case, taking $i = 0$ yields a word in $K$ of the form $ub^n\#\hat{v}^\rev b^n$, where $|\hat{v}| <
  |v|$. Thus $ub^n =_S b^n\hat{v} = (\hat{v}\phi^n)b^n$. Thus we can replace $v$ by a strictly shorter word $\hat{v}$
  that has same image under $\phi^n$.

  These replacements do not alter the element $u = v\phi^n$ of $\im\phi^n$. However, since these replacements are always
  by strictly shorter words, this replacement process, which began with the assumption that $|v| > n$, must reach a state
  where $|v| \leq n$. That is, $\im\phi^n \subseteq X^{\leq n}\phi^n$ and is thus finite.
\end{proof}

\begin{lemma}
    \label{lem:bruckreillyup}
Let $M$ be a $\cfwp$ monoid and $\phi: M\ra M$ an endomorphism such that
$\im \phi^n$ is finite for some $n$.  Then the Bruck--Reilly extension $\BR(M,\phi)$
is $\cfwp$.
\end{lemma}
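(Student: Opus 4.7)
My plan is to construct a pushdown automaton $\pda{Q}$ recognising $\WP(\BR(M,\phi), A \cup \set{b,c})$, where $A$ is a finite generating set for $M$; by Proposition~\ref{prop:changegensubsemigroups}.1 this suffices. The PDA $\pda{Q}$ will use a PDA $\pda{P}_M$ recognising $\WP(M,A)$ as a subroutine. The starting point is the unique normal form: every element of $\BR(M,\phi)$ can be written uniquely as $c^\gamma m b^\alpha$ with $\gamma, \alpha \in \nset \cup \set{0}$ and $m \in M$, so $u =_{\BR(M,\phi)} v$ if and only if $\gamma_u = \gamma_v$, $\alpha_u = \alpha_v$ and $m_u =_M m_v$. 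For a word $w \in (A \cup \set{b,c})^*$, a direct analysis using the defining relations $ba = (a\phi)b$, $ac = c(a\phi)$ and $bc = 1$ shows that $\gamma_w, \alpha_w$ are obtained from the bicyclic Dyck-matching of the $\set{b,c}$-projection of $w$, while $m_w = \prod_i a_i\phi^{d_i}$, where $d_i$ is the number of Dyck-matching arcs crossing position $i$: the open $b$-arcs reaching from the left together with the arcs of unmatched $c$'s coming from the right. A short Dyck-scan argument also shows that in $w$ all unmatched $c$'s precede all unmatched $b$'s, so the positional order of the letters of $w$ is consistent with the layout $c^{\gamma_w} m_w b^{\alpha_w}$.

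The finiteness of $F := \im\phi^n$ is the key ingredient making the construction possible. For each $a \in A$ and $d \geq 0$, the element $a\phi^d$ lies in a finite set: the $\abs{A}n$ values $a\phi^d$ with $d < n$, together with the elements of $F$ when $d \geq n$. Fix once and for all a bounded-length word $w_{a,d} \in A^*$ representing $a\phi^d$ for each of these finitely many values. Since $\phi$ is a homomorphism, the concatenation $\prod_i w_{a_i,d_i}$ represents $m_w$ in $M$. The PDA $\pda{Q}$ operates in two phases. In Phase~1 (reading $u$), it nondeterministically classifies each $b$ and each $c$ of $u$ as matched (tracked via the usual Dyck-matching on the stack) or unmatched: unmatched $c$'s accumulate as stack symbols $\mathsf{c}$ at the bottom and unmatched $b$'s as $\mathsf{b}$ at the top; for each $A$-letter $a_i$ it guesses the depth $d_i$ and feeds $w_{a_i,d_i}$ into a simulation of $\pda{P}_M$, whose own stack occupies the middle layer between the $\mathsf{c}$'s and the $\mathsf{b}$'s. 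In Phase~2 (reading $v^\rev$), it pops $\mathsf{b}$'s while reading the leading $b$'s of $v^\rev$ (checking $\alpha_v = \alpha_u$), continues the $\pda{P}_M$-simulation by feeding it $w_{a,d}^\rev$ for each $A$-letter of $v^\rev$ with guessed depth $d$ (so that $\pda{P}_M$ receives the reverse of the $m_v$-representative after the $\#$, checking $m_u =_M m_v$), and finally pops $\mathsf{c}$'s for the trailing $c$'s of $v^\rev$ (checking $\gamma_v = \gamma_u$); $\pda{Q}$ accepts when $\pda{P}_M$ is in an accepting state and the outer stack has returned to its bottom marker.

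The main obstacle will be coordinating these three stack uses---Dyck-matching, $\pda{P}_M$-simulation and depth bookkeeping---on the single stack, together with verifying that the guessed depths agree with the eventual Dyck structure. The delicate point is when an $A$-letter of $u$ is read while $\mathsf{b}$-symbols sit on top of the stack: we cannot insert directly into the middle $\pda{P}_M$-layer without disturbing the overlying $\mathsf{b}$'s. The resolution is to classify each $b$ nondeterministically at read-time: matched $b$'s act as opening brackets, and the $A$-letters inside a bracket are processed within the $\pda{P}_M$-simulation between the push of $\mathsf{b}$ and its pop by the matching $c$; unmatched $b$'s are pushed to the $\mathsf{b}^{\alpha_u}$-layer only once the current block of nested matched pairs and $A$-letters has been absorbed. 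Each guessed depth $d_i$ must agree with the nesting depth determined by the guessed Dyck-matching, and this is verified locally as each matched pair closes. Handling these layered dynamics correctly, and ensuring that the symmetric Phase~2 processing of $v^\rev$ aligns with what Phase~1 placed on the stack, is the technically most intricate part of the construction.
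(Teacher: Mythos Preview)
Your overall strategy matches the paper's: encode the normal form $c^\gamma m b^\alpha$ on a three-zone stack, simulate a PDA for $\WP(M,A)$ in the middle zone, and exploit finiteness of $\im\phi^n$ to bound the information needed about the exponents $d_i$. The formula $m_w=\prod_i a_i\phi^{d_i}$ with $d_i$ the number of arcs crossing position $i$ is the right starting point.

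However, the resolution you offer for the middle-layer access problem does not work. Classifying each $b$ as matched or unmatched does nothing to stop the $\pda{P}_M$-layer from being buried arbitrarily deep: matched $b$'s can nest to any depth, and while they are open you must still feed the intervening $A$-letters to $\pda{P}_M$ \emph{in order}, so they cannot be buffered on the stack and replayed later without reversing them. The word $b^k a\,c^k$ with $k$ large is a concrete obstruction: after $k$ matched-$b$ markers are pushed, the $\pda{P}_M$ zone lies $k$ symbols below the top, and you cannot deliver $w_{a,k}$ to it. There is a second, related difficulty you do not address: unmatched $c$'s need not precede all $A$-letters (consider $a_1 c\,a_2$), so by the time such a $c$ arrives the $\pda{P}_M$-simulation has already written to the middle layer and you cannot push a $\mathsf{c}$-symbol ``to the bottom''.

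The paper's fix is a single additional idea that your sketch lacks. While the current left-depth $k$ is below $n$, it is stored as one compact symbol $B_k$ sitting directly above the $\pda{P}_M$ zone, so $\pda{P}_M$ is always one pop away; once $k$ together with a \emph{bounded} nondeterministic guess $l\le n-k$ of the future right-contribution reaches $n$, the element $a\phi^{k+l}$ lies in the finite submonoid $T=\im\phi^n$ and is multiplied into a state component $t\in T^1$ rather than sent to $\pda{P}_M$. That bounded guess $l$, decremented as later uncancelled $c$'s are read, also removes the need ever to push a $\mathsf{c}$-symbol beneath an active $\pda{P}_M$ zone. Your proposal correctly singles out finiteness of $\im\phi^n$ as the key hypothesis, but it is exactly this ``bounded overhang plus $T$-in-state accumulation'' mechanism that turns that finiteness into a working PDA; the matched/unmatched classification alone does not.
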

\begin{proof}
Let $S = \BR(M,\phi)$.
Let $T = \im \phi^n$, and let $i$ be the index and $p$ the period of the action of $\phi$
on the finite monoid $T$.  Let $m = n + i + p - 1$, so we have $\phi^{m+1} = \phi^{n+i}$.
For $\beta\in \nset_0$, let $[\beta]$ denote the unique element of $\set{0,\ldots,m}$
such that $\phi^\beta = \phi^{[\beta]}$.

Let $X$ be a finite generating set for $M$ with $T\subseteq X$, and let
$\B$ be a pushdown automaton recognising $\WP(M,X)$, with states $Q$,
initial state $q_0$ and stack alphabet $\Gamma$, with bottom marker $\bot\in \Gamma$,
which is never pushed or popped during computations in $\B$.
For $z\in X^*$, we will denote the stack contents (including $\bot$)
of $\B$ after reading
$z$ from the start state $q_0$ by $s_z$, and the state reached at that point
by $q_z$.  Of course if $\B$ is non-deterministic then $s_z$ and $q_z$ are not
necessarily unique.  We use the notation to represent any possible choice.

Let $Y = X\cup \set{b,c}$.  We will define a pushdown automaton $\A$ recognising
$\WP(S,Y)$.  The idea behind the automaton is that while reading a word $u\in Y^*$,
it can use its stack both to record the $c^\alpha$--$b^\beta$ part of
the the Bruck--Reilly normal form, while also simulating $\B$: the
stack contents will be $c^\alpha\text{[simulated stack of
$\B$]}b^\beta$.
The automaton $\A$ tracks of the number of times
$\phi$ must be applied to the next
subword from $X^*$ as a result of uncancelled symbols $b$ already read, and
guesses (and records and later checks) how many further
applications of $\phi$ are required due to future uncancelled symbols $c$.
This guess need only be one of a finite number of possibilities, since there are
only $m$ distinct actions of powers of $\phi$ on $M$.
The key to this working is that whenever the automaton needs to access stack
symbols from $\Gamma$, there will be a bounded amount of information above
these symbols and so they can be accessed without destroying the information above.
If $u =_S c^\alpha \hat{u} b^\beta$, the information the automaton ends up storing
about $\hat{u}$ is comprised of two components: a word $s_z$ in $\Gamma^*$ on the stack,
and a symbol $t$ from $T$ in the state, such that $\hat{u} = zt$.
Upon then reading $\#v^\rev$ for $v\in Y^*$, similar reasoning allows us to check
$v^\rev$ against the stored `normal form' to determine whether $u=_S v$.

The states of $\A$ are divided into `pre-$\#$' and `post-$\#$' states,
and we begin by describing the behaviour in the pre-$\#$ states.  These states are of
the form $(q,t,k,-)$ or $(q,t,k,l)$, where $q\in Q$, $t\in T^1$, $0\leq k \leq m$ and
$l\in \set{0, \ldots, \max\{0,n-k\}}$.
The start state of $\A$ is $(q_0,1,0,-)$, where $1$ is the adjoined identity in $T^1$
(not equal to the identity element of $T$).
%The stack alphabet is $\Gamma\cup \set{B_1,\ldots,B_{n+i}, B, C}$ AND D-symbols.

Our goal is that after reading a word $w\in Y^*$, for some word $z\in X^*$
the automaton is in the following configuration:
\begin{itemize}
\item
If $w = c^\alpha$, the stack contents are $C^\alpha$.
Otherwise, the stack contents are of the form
$C^\alpha s_z B_\beta$ for $0\leq \beta \leq n+i$ or
$C^\alpha s_z B_{n+i} B^{\beta-n-i}$ for $\beta>n+i$.
\item
If $w\in \set{b,c}^*$, then the state is $(q_0,1,[\beta],-)$.
Otherwise, the state is $(q_z, t, [\beta], l)$, where $l$ represents a guess regarding the
number of symbols $c$ still to be read.  If $l$ takes the maximum value $n-[\beta]$,
the guess is that there are still \emph{at least} that many symbols $c$ to be read,
whereas for other values of $l$ the guess is exact.
\item
If the last state component is blank, then $z = \emptyword $ and $w =_S c^\alpha b^\beta$.
If $l=0$, then $w =_S c^\alpha zt b^\beta$.  Otherwise $w c^{\beta+l} =_S c^\alpha zt$.
\end{itemize}
The state component $k$ records the number of times $\phi$ needs to be applied to
the next upcoming subword from $X^*$ as a result of symbols $b$ already read.
The state component $l$ represents a guess at the number of times $\phi$ needs to
be applied to the same subword as a result of uncancelled symbols $c$ still to be read.
If $\phi$ is to be applied at least $n$ times, then the element represented falls into $T$
and we begin recording it in the state instead, in the component $t$.
The `guess' at the value of $l$ occurs when the automaton first reads a symbol from $X$,
and the idea is that we will be able to tell when we have read
the right number of symbols $c$ by the final state component being $0$.
The automaton has the property that if our initial guess is the maximum possible,
then $t\neq 1$ once the guess has been made.  This allows us to detect when
a non-maximal guess is incorrect:
the automaton fails if a $c$ is read from a state of the form
$(q,1,k,0)$, as this will imply we have seen more symbols $c$ than we had guessed.

We now describe the behaviour of $\A$ in a state $(q,t,k,-)$ or $(q,t,k,l)$ on input from $Y$.
Observe that the stack contents are always in $C^* \Gamma^* S_B$ or in
$C^* \Gamma^* B_{n+i} B^*$, where $S_B = \set{B_1,\ldots,B_{n+i-1}}$.
Note that some transitions technically require more than one state to execute,
but we do not name the intermediate states involved in these transitions,
the important point being that there are finitely many of them in total.
Choose a map $\psi: X\ra X^*$ such that $x\psi =_M x\phi$ for all $x\in X$.
%Note that whenever $\A$ simulates $\B$, the stack symbol $C$ must be treated
%as equivalent to the bottom-of-stack symbol.
\begin{itemize}
\item
On input $b$: if the stack is empty or the top-of-stack is $C$,
push $\bot B_1$;
if the top-of-stack is in $X$, push $B_1$;
if the top-of-stack is $B_j$ for $j< n+i$, replace it by $B_{j+1}$;
if the top-of-stack is $B_{n+i}$ or $B$, push $B$.
In all cases, replace $k$ by $[k+1]$.
This ensures that after reading $b^\beta$ from the start state,
the third component of the state is $[\beta]$.
\item
On input $c$:
if $B$ at top of stack, pop $B$ and replace $k$ by $k-1$ unless $k = n+i$
and the next symbol on the stack is also $B$, in which case
replace $k$ by $m$;
if $C$ is at the top of the stack, add another $C$ (this can only occur while $t=1$);
if $x\in X$ is at the top of the stack, replace $l$ by $l-1$ unless $l=0$, in which case
we fail if $t=1$, and replace $t$ by $t\phi$.
\item
On input $x\in X$,
if the last state component is blank, we first replace it
by either $0$ if $k\geq n$, or otherwise some non-deterministically chosen
$l\in \set{0,\ldots,n-k}$, before proceeding.
When the last state component is not blank: if $k+l\geq n$, replace $t$ by
$t (x\phi^{[k+l]})\in T$,
without altering the stack; otherwise, the top-of-stack will be $B_k$,
and we pop this from the stack, then simulate reading $x\psi^{k+l}$ from state
$q$ in the automaton $\B$, replacing $q$ by the state reached and finally adding
$B_k$ to the top of the stack.
\end{itemize}

Next, we describe the `transition' phase of $\A$, when $\#$ is read from
a pre-$\#$ state $(q,t,k,-)$ or $(q,t,k,l)$.
\begin{itemize}
\item
If the last state component is blank, this means that we read no symbols
from $X$.  The state will be $(q_0,1,[\beta],-)$ and the stack contents $c^\alpha b^\beta$.
Move to state $(q_\#,1,1,[\beta],0)_\#$.  (Recall that $q_\#$ denotes a state reached after reading
$\#$ in $\B$ from state $q_0$.)
If moreover the top-of-stack is $C$, then we had $u = c^\alpha$ and so the stack contents are $C^\alpha$.
In this case we move to a special state $f$, without altering the stack.
\item
If $l\neq 0$, the automaton fails, since this means we have not read as many
symbols $c$ as we guessed, so that $\phi$ has not been applied the correct number
of times.
\item
If $l=0$, move to state $(q,t,1,[\beta],0)_\#$.
\end{itemize}

Note that after the transition phase, $\A$ is in a `post-$\#$' state of the form
$(q,t,1,k,l)_\#$, or in the state $f$.
In general the post-$\#$ states other than $f$ have the form $(q,t,t',k,l)_\#$
or $(q,t,t',k,l)$ for $q\in Q$, $t,t'\in T^1$, $0\leq l\leq m$ and
$k\in \set{0,\ldots, \max\{0,n-l\}}$.  The subscript $\#$ indicates that the symbol
$\#$ has not yet been simulated in $\B$.
We first describe the action of $b$ and $c$ in these states.
A new set of stack symbols $S_D = \set{D_1,\ldots,D_{n+i}, D}$ is introduced
at this stage to count uncancelled occurrences of $c$.
Note that in these states the bottom-of-stack marker $\bot$ from $\B$ is always
still on the stack, so it is not possible for the top-of-stack to be $C$.

\begin{itemize}
\item
On input $c$:
If top-of-stack is $B$, $B_j$ or $\gamma\in \Gamma$, push $D_1$ onto stack;
if top-of-stack is $D_j$ for $j<n+i$, replace it by $D_{j+1}$;
if top-of-stack is $D_{n+i}$ or $D$, push $D$ onto stack.
%For top-of-stack $C$, we describe the action in the next stage.
In all cases, replace $l$ by $[l+1]$.

\item
On input $b$:
If top-of-stack is $B$ or $B_1$, pop it;
if top-of-stack is $B_j$ for $j>1$, replace it by $B_{j-1}$;
in these two cases replace $k$ by $k-1$, unless $k=n+i$
and the top two stack symbols are both $B$, in which case
replace $k$ by $m$.
If top-of-stack is $D$ or $D_1$, pop it;
if top-of-stack is $D_j$ for $j>1$, replace it by $D_{j-1}$;
in these two cases replace $l$ by $l-1$, or by
$m$ if $l=n+i$ and the top two symbols are both $D$.
The transitions for $B$ and $B_j$ cancel final symbols $b$ from the normal form of $v^\rev$
against those from $u$.  The transitions for $D$ and $D_j$ apply the relation $bc = 1$ in
reverse and ensure that if $v = c^\gamma z b^\delta$ then the final state component is
$[\gamma]$ after reading $v^\rev$.
If top-of-stack is
%$C$ or
$\gamma\in \Gamma$, the automaton fails, since this means that the normal form
of $v$ contains more symbols $b$ than the normal form of $u$.
\end{itemize}

Suppose that after reading $u\in Y^*$, the automaton was in state
$(q,t,k,0)$ with stack contents $C^\alpha s_z B^\beta$.
Then after further reading $\# w^\rev$ with $w\in \set{b,c}^*$ and $w=_S c^\gamma b^\delta$,
if $\delta>\beta$ the automaton has failed, and if $\beta\geq\delta$
the stack contents are $C^\alpha s_z B^{\beta - \delta} D^\gamma$
and the state is $(q,t,1,[\beta - \delta], [\gamma])_\#$,
unless we have meanwhile non-deterministically transitioned to the state
$f$ as described below.

Next, we describe the action of $x\in X$ in a post-$\#$ state $(q,t,t',k,l)_{\#}$
or $(q,t,t',k,l)$.
\begin{itemize}
\item
If $k+l\geq n$, replace $t'$ by $(x\phi^{[k+l]})t'\in T$.
\item
If $k+l<n$, the top two symbols of the stack are $B_{\delta} D_{\gamma}$.
Pop these two symbols, then simulate reading $t (\#) t'(x\psi^{[k+l]})$ from
state $q$ in $\B$ (where $(\#)$ indicates that the $\#$ is present if and only
if it was in the subscript of the state), replacing $q$ by the state reached,
and then return the
two popped symbols to the top of the stack (in the same order).
If the state had a $\#$ subscript, remove it to record that $\#$ has now
been simulated in $\B$.
\end{itemize}

Since $\B$ cannot be assumed in general to accept by empty stack, $\A$ has to
non-deterministically guess when it has finished receiving input from $X$:
\begin{itemize}
\item
In each state $(q,t,t',0,l)_{(\#)}$, there is an $\epsilon$-transition
which simulates reading $t(\#)t'$ in $\B$.  Let $q'\in Q$ be the state reached
following this computation.  Then if $q'$ is a final state of $\B$, the automaton
deletes all remaining symbols from $\Gamma$ (including $\bot$) from the stack and
moves to state $f$.
Otherwise the automaton fails.
\end{itemize}

On reaching the state $f$, the stack contents are always of the form $C^\alpha$.
The state $f$ only accepts input $b$ and $c$, and behaves like the `post-$\#$' state of
a pushdown automaton for the word problem of the bicyclic monoid:
\begin{itemize}
\item
On input $c$: with $C$ at top-of-stack, non-deterministically either pop $C$
or push $D$ (to indicate a guess that there are still symbols $b$ to be read);
with $D$ at top-of-stack, or on empty stack, push $D$.
\item
On input $b$: fail if top-of-stack is $C$; on top-of-stack $D$, pop $D$.
\end{itemize}

Final acceptance in $\A$ is by empty stack in state $f$.
\qed
\end{proof}

Lemmata~\ref{lem:bruckreillydown}, \ref{lem:bruckreillyimphi} and~\ref{lem:bruckreillyup} combine to give a complete
characterisation of the $\cfwp$ monoids occurring as Bruck--Reilly extensions:

\begin{theorem}
  \label{thm:bruckreilly}
  Let $M$ be a monoid and $\phi : M \to M$ an endomorphism. Then $\BR(M,\phi)$ is $\cfwp$ if and only if $M$ is $\cfwp$
  and $\im\phi^n$ is finite for some $n$.
\end{theorem}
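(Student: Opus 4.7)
The plan is to observe that the theorem is essentially a bookkeeping combination of the three lemmata that precede it, so no new ideas are required; the work lies in recognising that together they cover both implications.

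For the forward direction, I would assume that $\BR(M,\phi)$ is $\cfwp$ and invoke \fullref{Lemma}{lem:bruckreillydown} to conclude that $M$ is $\cfwp$, and then \fullref{Lemma}{lem:bruckreillyimphi} to conclude that $\im\phi^n$ is finite for some $n \in \nset$. Neither lemma requires a hypothesis beyond $\BR(M,\phi)$ being $\cfwp$, so they apply directly. For the converse, given that $M$ is $\cfwp$ and $\im\phi^n$ is finite for some $n$, \fullref{Lemma}{lem:bruckreillyup} immediately yields that $\BR(M,\phi)$ is $\cfwp$.

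There is no main obstacle here, since all technical content is already packaged in the lemmata. The only small care needed is to make sure the quantifier on $n$ is treated consistently: the hypothesis of \fullref{Lemma}{lem:bruckreillyup} asks for \emph{some} $n$ with $\im\phi^n$ finite, and \fullref{Lemma}{lem:bruckreillyimphi} supplies exactly that, so the two statements interlock without any reindexing. Consequently the proof will consist of two short sentences, one per direction, each citing the relevant lemma.
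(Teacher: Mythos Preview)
Your proposal is correct and matches the paper's own approach exactly: the paper states that Lemmata~\ref{lem:bruckreillydown}, \ref{lem:bruckreillyimphi} and~\ref{lem:bruckreillyup} combine to give the theorem, and does not supply a separate proof. Your observation that the forward direction uses the first two lemmata and the converse uses the third is precisely the intended assembly.
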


The bicyclic monoid arises as the submonoid generated by
$\set{b,c}$ in any Bruck--Reilly extension. As discussed in \fullref{Section}{sec:examples}, we conjecture that this monoid is not $U(\DCF)$, and hence no Bruck--Reilly
extension is $U(\DCF)$.

\section{Further open problems}
\label{sec:openprob}

\begin{question}
Does every cancellative $\cfwp$ semigroup have deterministic context-free word problem?
\end{question}

\begin{question}
Is it possible to characterize the commutative (respectively, cancellative, inverse) $\cfwp$ semigroups?
\end{question}

The previous two questions are motivated by the group case, since the classes of $\cfwp$ and $\dcfwp$ groups coincide
and are precisely the virtually free groups. In particular, the abelian $\cfwp$ groups are thus either finite or of the
form $\zset \times F$, where $F$ is finite and abelian.

\begin{question}
Does there exist an infinite periodic $\cfwp$ semigroup?
\end{question}

\begin{question}
Must the group of units of a $\cfwp$ monoid be finitely generated?
\end{question}

\bibliography{\jobname}
\bibliographystyle{alphaabbrv}

\end{document}